\tikzset{%
%% ordered sets
element/.style={draw, shape=circle, fill=white, inner sep=1.4pt}
}
\DeclareSymbolFont{bbold}{U}{bbold}{m}{n}
\DeclareSymbolFontAlphabet{\mathbbold}{bbold}
\theoremstyle{plain}
\newtheorem{thm}{Theorem}[section]
\newtheorem{lem}[thm]{Lemma}
\newtheorem{cor}[thm]{Corollary}
\newtheorem{pro}[thm]{Proposition}
\theoremstyle{definition}
\newcommand{\up}[1]{\textup{#1}}
\newcommand{\bp}{\mathbf{p}}
\newcommand{\bq}{\mathbf{q}}
\newcommand{\bu}{\mathbf{u}}
\newcommand{\bv}{\mathbf{v}}
\begin{document}

\title[Every ai-semiring satisfying $xy\approx xz$ is finitely based]
{Every additively idempotent semiring satisfying $xy\approx xz$ is finitely based}

\author{Mengya Yue}
\address{School of Mathematics, Northwest University, Xi'an, 710127, Shaanxi, P.R. China}
\email{myayue@yeah.net}

\author{Miaomiao Ren}
\address{School of Mathematics, Northwest University, Xi'an, 710127, Shaanxi, P.R. China}
\email{miaomiaoren@yeah.net}

\subjclass[2010]{16Y60, 03C05, 08B05}
\keywords{semiring, finitely based, finitely generated, Cross variety}
\thanks{Miaomiao Ren, corresponding author, is supported by National Natural Science Foundation of China (12371024, 12571020).
}

\begin{abstract}
We study the finite basis problem for additively idempotent semirings satisfying the identity $xy \approx xz$.
Let $\mathbf{R}$ denote the variety of all such semirings.
Yue et al. (2025, Algebra Universalis, DOI:10.1007/s00012-025-00908-5) established that $\mathbf{R}$ is finitely generated.
In this paper, we show that the subvariety lattice of $\mathbf{R}$ forms a distributive lattice of order $10$.
As a consequence, the variety $\mathbf{R}$ is a Cross variety, and every member of $\mathbf{R}$ is finitely based.
\end{abstract}

\maketitle

\section{Introduction and preliminaries}
A \emph{variety} is a class of algebras that is closed under taking subalgebras, homomorphic images, and arbitrary direct products.
By Birkhoff's celebrated theorem, a class of algebras is a variety if and only if it is an equational class, that is,
the class of all algebras that satisfy some set of identities.

Let $\mathcal{V}$ be a variety. Then $\mathcal{V}$ is \emph{finitely based}
if it has a finite equational basis, that is, it can be defined by finitely many identities.
Otherwise, $\mathcal{V}$ is \emph{nonfinitely based}.
The variety $\mathcal{V}$ is \emph{finitely generated}
if it can be generated by a finite number of finite algebras.
The variety $\mathcal{V}$ is a \emph{Cross variety}
if it is finitely based, finitely generated, and has a finite number of subvarieties.
According to \cite{mv}, it is known that every subvariety of a Cross variety is both finitely based and finitely generated.

An algebra $A$ is finitely based if the variety $\mathsf{V}(A)$ it generates is finitely based;
Otherwise, $A$ is nonfinitely based.
It is well-known that every finite group (finite ring) generates a Cross variety (see~\cite{rl, i, s}).
However, this property does not hold for all finite semigroups or finite semirings (see~\cite{d, vol01}).

An \emph{additively idempotent semiring} (or ai-semiring for short)
is an algebra $(S, +, \cdot)$ equipped with two binary operations $+$ and $\cdot$ satisfying the following axioms:
\begin{itemize}
\item The additive reduct $(S, +)$ forms a commutative idempotent semigroup;

\item The multiplicative reduct $(S, \cdot)$ forms a semigroup;

\item The distributive laws hold:
\[
x(y+z)\approx xy+xz,~\text{and}~(x+y)z\approx xz+yz.
\]
\end{itemize}
Such algebras have broad applications in various fields, including algebraic geometry, tropical algebraic geometry,
theoretical computer science, and information science (see~\cite{cc, gl, go, ms}).

Over the past decades,
the finite basis problem for ai-semirings has attracted significant attention from scholars worldwide
and has been intensively studied
(see~\cite{d, gmrz, gpz05, jrz, mr, pas05, rlzc, rlyc, rjzl, rzw, rzs20, shap23, sr, wrz, yrzs, zrc}).
Dolinka~\cite{d} found the first example of finite nonfinitely based ai-semirings, which contains $7$ elements.
Volkov~\cite{vol21} proved that the ai-semiring $B_2^1$ whose multiplicative reduct is a
$6$-element Brandt semigroup has no finite basis for its equational theory.
Shao and Ren~\cite{sr} established that every variety generated by ai-semirings of order two is finitely based.
Zhao et al.~\cite{zrc} showed that
with the possible exception of the semiring $S_7$ (its Cayley tables can be found in Table \ref{tb24111401}),
all ai-semirings of order three are finitely based.
Jackson et al.~\cite{jrz} presented some general results about the finite basis problem for finite ai-semirings,
and confirmed that $S_7$ is nonfinitely based.
Recently,
Gao et al.~\cite{gmrz}, Ren et al.~\cite{rlyc, rlzc}, Shaprynski\v{\i}~\cite{shap23},
Wu et al.~\cite{wrz}, and Yue et al.~\cite{yrzs}
initiated the study of the finite basis problem for ai-semirings of order four.
Up to now, this work is still ongoing.

\begin{table}[ht]
\caption{The Cayley tables of $S_7$} \label{tb24111401}
\begin{tabular}{c|ccc}
$+$      &$\infty$&$a$&$1$\\
\hline
$\infty$ &$\infty$&$\infty$&$\infty$\\
$a$      &$\infty$&$a$&$\infty$\\
$1$      &$\infty$&$\infty$&$1$\\
\end{tabular}\qquad
\begin{tabular}{c|ccc}
$\cdot$  &$\infty$&$a$&$1$\\
\hline
$\infty$ &$\infty$&$\infty$&$\infty$\\
$a$      &$\infty$&$\infty$&$a$\\
$1$      &$\infty$&$a$&$1$\\
\end{tabular}
\end{table}

On the other hand,
McKenzie and Romanowska~\cite{mr} proved that every ai-semiring satisfying the identities
$x^2 \approx x$ and $xy \approx yx$ is finitely based.
Pastijn et al.~\cite{gpz05, pas05} showed that there are exactly $78$ varieties
of ai-semirings satisfying the identity $x^2 \approx x$, each of which is finitely based.
Ren et al.~\cite{rz16, rzw} demonstrated that the set of all ai-semiring varieties satisfying the identity
$x^3 \approx x$ forms a $179$-element distributive lattice, with every member being finitely based.
Ren et al.~\cite{rzs20} established that if $n>1$ is an integer and $n-1$ is square-free,
then every ai-semiring satisfying the identities $x^n \approx x$ and $xy \approx yx$ is finitely based.

Despite the above progress on the finite basis problem for ai-semirings,
there is no analogue of Perkins' foundational results for semigroups \cite{per69}:
the finite basis property for commutative semigroups and uniformly periodic permutative semigroups has been established.
In fact, the paper \cite{jrz} demonstrates that even finite ai-semirings with commutative multiplicative reducts
may have no finite equational basis, with $S_7$ being a specific example.
This striking contrast motivates Jackson et al. \cite{rjzl} to advocate for
identifying general laws ensuring finite axiomatizability for varieties of ai-semirings.
In this paper, we advance this research program by showing that
every ai-semiring satisfying the identity
\begin{equation}\label{id0703}
xy\approx xz
\end{equation}
is finitely based.
Let $\mathbf{R}$ denote the ai-semiring variety defined by the identity \eqref{id0703}.
Then for any algebra $S$ in $\mathbf{R}$, the multiplicative Cayley table of $S$ has constant rows,
that is, all entries in each row are identical.
In section 2, we shall show that the subvariety lattice of $\mathbf{R}$ is a distributive lattice of
order $10$. Each member of this lattice is finitely based and finitely generated.

Let $S$ be an ai-semiring. Then the binary relation $\leq$ defined by
\[
a \leq b \Leftrightarrow a+b=b,
\]
is a partial order on $S$.
It is easy to see that $\leq$ is compatible with the addition and the multiplication of $S$.
Let $X$ denote a countably infinite set of variables and $X^+$ the free semigroup on $X$.
By distributivity, all ai-semiring terms over $X$ are finite sums of words in $X^+$.
An \emph{ai-semiring identity} over $X$ is an expression of the form
\[
\bu\approx \bv,
\]
where $\bu$ and $\bv$ are ai-semiring terms over $X$.
From \cite[Theorem 2.5]{ku} we know that the ai-semiring $(P_f(X^+), \cup, \cdot)$ consisting of all non-empty finite subsets of $X^+$
is free in the variety of all ai-semirings on $X$.
So we sometimes write
\[
\{\bu_i \mid 1 \leq i \leq k\}\approx \{\bv_j \mid 1 \leq j \leq \ell\}
\]
for the ai-semiring identity
\[
\bu_1+\cdots+\bu_k\approx \bv_1+\cdots+\bv_\ell.
\]
An \emph{ai-semiring substitution} is an endomorphism of $P_f(X^+)$.
Let $S$ be an ai-semiring and $\bu\approx \bv$ an ai-semiring identity.
We say that $S$ \emph{satisfies} $\bu\approx \bv$ or $\bu\approx \bv$ \emph{holds} in $S$
if $\varphi(\bu)=\varphi(\bv)$ for all semiring homomorphisms $\varphi: P_f(X^+)\rightarrow S$.

Suppose that $\Sigma$ is a set of ai-semiring identities that includes the identities defining the variety of all ai-semirings.
Let $\mathbf{u}\approx \mathbf{v}$ be an ai-semiring identity such that
\[
\mathbf{u}=\mathbf{u}_1+\cdots+\mathbf{u}_k,~\mathbf{v}=\mathbf{v}_1+\cdots+\mathbf{v}_\ell,
\]
where $\mathbf{u}_i,\mathbf{v}_j\in X^+$ for $1\leq i\leq k$ and $1\leq j\leq \ell$.
It is easy to verify that the ai-semiring variety defined by $\mathbf{u}\approx \mathbf{v}$
coincides with the ai-semiring variety defined by the simpler identities:
\[
\mathbf{u} \approx \mathbf{u}+\mathbf{v}_j,~\mathbf{v} \approx \mathbf{v}+\mathbf{u}_i
\]
for all $1\leq i\leq k$ and $1\leq j\leq \ell$.
Therefore, to prove that $\mathbf{u}\approx \mathbf{v}$ is derivable from $\Sigma$, it suffices to show that for each $1\leq i\leq k$ and $1\leq j\leq \ell$, the identities $\mathbf{u}\approx \mathbf{u}+\mathbf{v}_j$ and $\mathbf{v}\approx \mathbf{v}+\mathbf{u}_i$ can be derived from $\Sigma$. This technique will be frequently employed in what follows.

Next, we introduce some notation that will be repeatedly used in the sequel.
Let $\bp$ be a word in $X^+$ and $x$ a letter in $X$. Then
\begin{itemize}
\item $h(\bp)$ denotes the first variable that occurs in $\bp$.

\item $t(\bp)$ denotes the last variable that occurs in $\bp$.

\item $\ell(\bp)$ denotes the number of variables occurring in $\bp$ counting multiplicities.

\item $s(\bp)$ denotes the word obtained from $\bp$ by deleting the letter $h(\bp)$, that is, $\bp=h(\bp)s(\bp)$.
\end{itemize}

Up to isomorphism, there are exactly 6 ai-semirings of order two (see \cite{sr}).
In this work, we only need four of them, which are denoted by $L_2$, $R_2$, $N_2$, and $T_2$.
Their Cayley tables and equational bases are given in Table~\ref{2-element ai-semirings}.
We assume that the carrier set of each algebra is $\{0,1\}$.
It is easy to check that $L_2$, $N_2$, and $T_2$ lie in the variety ${\bf R}$.
The following result, adapted from \cite{sr}, characterizes the equational theories
of these semirings and will be used implicitly throughout.

\begin{lem}\label{nlemma1}
Let $\bu\approx \bu+\bq$ be a nontrivial ai-semiring identity such that
$\bu=\bu_1+\cdots+\bu_n$, where $\bu_i, \bq\in X^+$, $1\leq i \leq n$. Then
\begin{itemize}
\item[$(1)$] $\bu\approx \bu+\bq$ holds in $L_2$ if and only if $h(\bu_i)=h(\bq)$ for some $\bu_i \in \bu$.

\item[$(2)$] $\bu\approx \bu+\bq$ holds in $R_2$ if and only if $t(\bu_i)=t(\bq)$ for some $\bu_i \in \bu$.

\item[$(3)$] $\bu\approx \bu+\bq$ holds in $N_2$ if and only if $\ell(\bq)\geq 2$.

\item[$(4)$] $\bu\approx \bu+\bq$ holds in $T_2$ if and only if $\ell(\bu_i)\geq 2$ for some $\bu_i \in \bu$.
\end{itemize}
\end{lem}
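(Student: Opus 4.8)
The plan is to reduce all four statements to a single uniform criterion and then read it off from the way words evaluate in each semiring. Since the additive reduct of each of $L_2$, $R_2$, $N_2$, $T_2$ is the two-element semilattice on $\{0,1\}$, which I take to be ordered $0<1$ with respect to $\le$, and since the identity has the special shape $\bu\approx\bu+\bq$, I would first observe that for every homomorphism $\varphi$ into such a semiring one has $\varphi(\bu+\bq)=\varphi(\bu)+\varphi(\bq)\ge\varphi(\bu)$. Hence the identity holds if and only if $\varphi(\bq)\le\varphi(\bu)$ for every $\varphi$, and because the codomain is the chain $0<1$ this can fail only when $\varphi(\bq)=1$ while $\varphi(\bu)=0$. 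Thus each part amounts to deciding exactly when such a \emph{separating assignment} exists.

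The second step is to record the closed form of the value $\varphi(\bw)$ of a word $\bw\in X^+$ in each semiring, which is immediate from the multiplicative Cayley tables. In $L_2$ (where $xy\approx x$) one has $\varphi(\bw)=\varphi(h(\bw))$, so a word is governed entirely by its first letter; dually in $R_2$ (where $xy\approx y$) one has $\varphi(\bw)=\varphi(t(\bw))$, governed by its last letter. In $N_2$ the product of any two elements is $0$, so $\varphi(\bw)=0$ whenever $\ell(\bw)\ge 2$, while $\varphi(\bw)$ equals the value of the unique letter when $\ell(\bw)=1$; in $T_2$ the product is always $1$, so $\varphi(\bw)=1$ whenever $\ell(\bw)\ge 2$ and again equals the value of the unique letter when $\ell(\bw)=1$. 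In every case $\varphi(\bu)=\varphi(\bu_1)+\cdots+\varphi(\bu_n)$.

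The third step is the case analysis, combining the criterion from step one with the evaluation formulas. For $(1)$, a separating assignment requires $\varphi(h(\bq))=1$ and $\varphi(h(\bu_i))=0$ for all $i$, which is possible precisely when $h(\bq)$ differs from every $h(\bu_i)$; negating gives the stated condition, and $(2)$ is identical with $t$ in place of $h$. For $(3)$ I would argue that if $\ell(\bq)\ge 2$ then $\varphi(\bq)=0$ always, so no separating assignment exists and the identity holds; whereas if $\ell(\bq)=1$, say $\bq=x$, then assigning $x\mapsto 1$ and every other variable to $0$ forces $\varphi(\bq)=1$ and $\varphi(\bu)=0$, the latter because each summand $\bu_i$ is either long (value $0$) or a single letter different from $x$ (value $0$); here the nontriviality hypothesis that $\bq\neq\bu_i$ is exactly what guarantees no summand equals $x$. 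Part $(4)$ is the dual situation for $T_2$: if some $\bu_i$ is long then $\varphi(\bu_i)=1$ for every $\varphi$, forcing $\varphi(\bu)=1$ and leaving no room for a separating assignment; if every $\bu_i$ is a single letter, then the all-zero assignment separates $\bq$ from $\bu$ when $\ell(\bq)\ge 2$, and the assignment $x\mapsto 1$ used in $(3)$ separates them when $\bq=x$.

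I do not expect a genuine obstacle, since the argument is a routine evaluation. The one point demanding care is the correct use of the nontriviality hypothesis in parts $(3)$ and $(4)$: when $\bq$ is a single variable $x$, the separating assignment works only because $x$ is not itself one of the summands $\bu_i$, and overlooking this would make the "only if" directions false. A secondary bookkeeping point is to fix once and for all which element of the semilattice is the top, so that the inequality $\varphi(\bq)\le\varphi(\bu)$ is oriented consistently across all four cases.
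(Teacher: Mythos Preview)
Your argument is correct and self-contained. The paper does not actually prove this lemma: it is introduced as a result ``adapted from \cite{sr}'' and then used without further justification. Your reduction to the existence of a separating assignment $\varphi$ with $\varphi(\bq)=1$ and $\varphi(\bu)=0$, followed by the explicit evaluation of words in each of the four semirings, is exactly the natural elementary verification, and your handling of the nontriviality hypothesis in parts $(3)$ and $(4)$ is precisely the point that needs to be checked.
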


\begin{table}[ht]
\caption{Some 2-element ai-semirings}
\label{2-element ai-semirings}
\begin{tabular}{cccccc}
\hline
Semiring&   $+$ &  $\cdot$ & Equational basis\\
\hline
$L_2$&   \begin{tabular}{cc}
                    0 & 1   \\
                    1 & 1
              \end{tabular}&
\begin{tabular}{cc}
                    0 & 0  \\
                    1 & 1
              \end{tabular}

&
\begin{tabular}{cc}
                    & $xy\approx x$
\end{tabular}

\\
\hline
$R_2$&   \begin{tabular}{cc}
                    0 & 1   \\
                    1 & 1
              \end{tabular}&
\begin{tabular}{cc}
                    0 & 1   \\
                    0 & 1
              \end{tabular}
&
\begin{tabular}{cc}
                    & $xy\approx y$
\end{tabular}
\\
\hline
$N_2$&   \begin{tabular}{cc}
                    0 & 1   \\
                    1 & 1
              \end{tabular}&
\begin{tabular}{cc}
                    0 & 0  \\
                    0 & 0
              \end{tabular}
&
\begin{tabular}{cc}
                    & $x_1x_2\approx y_1y_2$, $x\approx x^2+x$
\end{tabular}
\\
\hline
$T_2$&   \begin{tabular}{cc}
                    0 & 1   \\
                    1 & 1
              \end{tabular}&
\begin{tabular}{cc}
                    1 & 1 \\
                    1 & 1
              \end{tabular}
&
\begin{tabular}{cc}
                    & $x_1x_2\approx y_1y_2$, $x^2\approx x^2+x$
\end{tabular}
\\
\hline

\end{tabular}

\end{table}

Up to isomorphism, there are exactly $61$ ai-semirings of order three,
which are denoted by $S_i, 1\leq i\leq 61$.
A complete description of these algebras can be found in \cite{zrc}.
For our current work, we specifically require the semirings
$S_{56}$, and $S_{58}$.
Table \ref{3-element ai-semirings} presents their Cayley tables and equational bases.
For uniformity,
we assume that the carrier set of each of these semirings is $\{1, 2, 3\}$.
It is easy to see that $S_{58}$ is a member of the variety ${\bf R}$.

\begin{table}[htbp]
\caption{Some 3-element ai-semirings}\label{3-element ai-semirings}
\begin{tabular}{cccc}
\hline
Semiring&   $+$ &  $\cdot$ & Equational basis\\
\hline
$S_{56}$&   \begin{tabular}{ccc}
                    1 & 1 & 3  \\
                    1 & 2 & 3\\
                    3 & 3 & 3
              \end{tabular}&
\begin{tabular}{ccc}
                    3 & 2 & 3  \\
                    3 & 2 & 3\\
                    3 & 2 & 3
              \end{tabular}
&
\begin{tabular}{cc}
                    & $xy\approx zy$, $x^2\approx x^2+x$
\end{tabular}
\\
\hline
$S_{58}$&   \begin{tabular}{ccc}
                    1 & 1 & 3  \\
                    1 & 2 & 3\\
                    3 & 3 & 3
              \end{tabular}&
\begin{tabular}{ccc}
                    3 & 3 & 3  \\
                    2 & 2 & 2\\
                    3 & 3 & 3
              \end{tabular}
&
\begin{tabular}{cc}
                    & $xy\approx xz$, $x^2\approx x^2+x$
\end{tabular}
\\
\hline
\end{tabular}
\end{table}

Up to isomorphism, there are exactly $866$ ai-semirings of order four, which are denoted by $S_{(4, i)}, 1\leq i\leq 866$.
One can find information about these algebras in \cite{rjzl, rlyc, yrzs}.
In the present work, we only need $S_{(4, 475)}$ and $S_{(4, 477)}$, whose Cayley tables and equational bases
are listed in Table \ref{4-element ai-semirings}.
We assume that the carrier set of each of these semirings is $\{1, 2, 3, 4\}$.
It is straightforward to check that $S_{(4, 475)}$ is in the variety ${\bf R}$.

\begin{table}[htbp]
\caption{Some 4-element ai-semirings}\label{4-element ai-semirings}
\begin{tabular}{cccc}
\hline
Semiring&   $+$ &  $\cdot$ & Equational basis\\
\hline
$S_{(4,475)}$&   \begin{tabular}{cccc}
                   1 & 1 & 1 & 1 \\
                   1 & 2 & 3 & 4\\
                   1 & 3 & 3 & 1\\
                   1 & 4 & 1 & 4
              \end{tabular}&
\begin{tabular}{cccc}
                    3 & 3 & 3 & 3 \\
                    2 & 2 & 2 & 2\\
                    3 & 3 & 3 & 3\\
                    3 & 3 & 3 & 3
              \end{tabular}
&
\begin{tabular}{cccc}
                    & $xy\approx xz$
\end{tabular}
\\
\hline
$S_{(4,477)}$&   \begin{tabular}{cccc}
                   1 & 1 & 1 & 1 \\
                   1 & 2 & 3 & 4\\
                   1 & 3 & 3 & 1\\
                   1 & 4 & 1 & 4
              \end{tabular}&
\begin{tabular}{cccc}
                    3 & 2 & 3 & 3 \\
                    3 & 2 & 3 & 3\\
                    3 & 2 & 3 & 3\\
                    3 & 2 & 3 & 3
              \end{tabular}
&
\begin{tabular}{cccc}
                    & $xy\approx zy$
\end{tabular}
\\
\hline
\end{tabular}

\end{table}

Let $K$ be a class of ai-semirings. Then $\mathsf{V}(K)$ denotes the variety generated by $K$.
In what follows we present concise and explicit equational bases for $\mathsf{V}(L_2, T_2)$, $\mathsf{V}(L_2, N_2)$,
$\mathsf{V}(N_2, T_2)$, and $\mathsf{V}(L_2, N_2, T_2)$, even though all of them have already been shown to be finitely based in \cite{sr}.

\begin{pro}\label{prolt}
$\mathsf{V}(L_2, T_2)$ is the subvariety of ${\bf R}$ defined by the identities
\begin{align}
x^2 & \approx x^2+x; \label{lt02}\\
x+y^2 &\approx x^2+y^2. \label{lt03}
\end{align}
\end{pro}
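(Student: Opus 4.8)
The plan is to prove the two inclusions separately; throughout, write $\mathbf{W}$ for the subvariety of $\mathbf{R}$ defined by \eqref{lt02} and \eqref{lt03}. For $\mathsf{V}(L_2,T_2)\subseteq \mathbf{W}$ I would simply evaluate \eqref{lt02} and \eqref{lt03} on the two-element Cayley tables of $L_2$ and $T_2$; since both algebras already lie in $\mathbf{R}$, checking these two identities directly gives the inclusion at once.

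For the reverse inclusion I would show that every identity holding in $\mathsf{V}(L_2,T_2)$ is derivable from the axioms of $\mathbf{R}$ together with \eqref{lt02} and \eqref{lt03}. By the reduction technique recalled before Lemma~\ref{nlemma1}, it suffices to treat a nontrivial identity of the form $\bu\approx \bu+\bq$, where $\bu=\bu_1+\cdots+\bu_n$ and $\bq\in X^+$, which holds in both $L_2$ and $T_2$. Working modulo the defining identity $xy\approx xz$ of $\mathbf{R}$, every word $\bw$ with $\ell(\bw)\geq 2$ reduces to $h(\bw)^2$ (first delete the tail to obtain a word of length two, then replace its second letter by its first), whereas words of length one are just variables. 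Hence I may assume each $\bu_i$ and $\bq$ is either a single variable or a square $x^2$; crucially, this reduction preserves both $h(\cdot)$ and the truth value of the predicate ``$\ell(\cdot)\geq 2$'', so it disturbs neither the hypotheses supplied by Lemma~\ref{nlemma1} nor the conclusion to be derived.

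Next I would invoke Lemma~\ref{nlemma1}: since $\bu\approx \bu+\bq$ holds in $L_2$ and in $T_2$, parts $(1)$ and $(4)$ give (a)~$h(\bu_i)=h(\bq)$ for some $\bu_i\in\bu$, and (b)~$\ell(\bu_j)\geq 2$ for some $\bu_j\in\bu$, i.e., some summand of $\bu$ is a square. The derivation then splits on the shape of the reduced $\bq$. If $\bq=y$ is a single variable, nontriviality forces the summand with head $y$ to be $y^2$, so using \eqref{lt02} I obtain $\bu+y=\bu+y^2+y=\bu+y^2=\bu$. If $\bq=y^2$, then nontriviality forces the summand with head $y$ to be the bare variable $y$, while (b) supplies a square summand $z^2$ of $\bu$; applying \eqref{lt03} in the form $y^2+z^2\approx y+z^2$ gives $\bu+y^2=\bu+y^2+z^2=\bu+y+z^2=\bu$, since $y$ and $z^2$ already occur in $\bu$. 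In either case $\bu\approx \bu+\bq$ is derivable from \eqref{lt02} and \eqref{lt03}, which yields $\mathbf{W}\subseteq \mathsf{V}(L_2,T_2)$ and completes the proof.

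I expect the main obstacle to be the bookkeeping in this final case analysis rather than any deep difficulty: one must track carefully which summands of $\bu$ are guaranteed by nontriviality as opposed to by Lemma~\ref{nlemma1}, and verify that the normal forms produced by the $\mathbf{R}$-reduction (variables and squares) are exactly the ones on which the two defining identities act. Once the correct square summand is located in each case, the derivations themselves are short applications of \eqref{lt02} and \eqref{lt03}.
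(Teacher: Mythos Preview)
Your proof is correct and follows essentially the same strategy as the paper: verify \eqref{lt02} and \eqref{lt03} on $L_2$ and $T_2$, then use Lemma~\ref{nlemma1}(1),(4) to derive any nontrivial identity $\bu\approx\bu+\bq$ by case analysis. The only cosmetic difference is that you first normalize every word modulo \eqref{id0703} to a variable or a square and then split on the shape of $\bq$, whereas the paper leaves the words unreduced and splits on $\ell(\bq)$ and $\ell(\bu_i)$; the resulting derivations are the same up to this preprocessing.
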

\begin{proof}
It is easy to check that both $L_2$ and $T_2$ satisfy the identities \eqref{lt02} and \eqref{lt03}.
In the remainder it is enough to prove that every ai-semiring identity that holds in both $L_2$ and $T_2$
can be derived by \eqref{lt02} and \eqref{lt03} and the identities defining ${\bf R}$.
Let $\bu \approx \bu+\bq$ be such a nontrivial identity,
where $\bu=\bu_1+\bu_2+\cdots+\bu_n$ with $\bu_i, \bq\in X^+$ for $1 \leq i \leq n$.
Then there exist $\bu_i, \bu_j\in \bu$ such that $h(\bu_i)=h(\bq)$ and $\ell(\bu_j)\geq2$.

If $\ell(\mathbf{q})=1$, then $\mathbf{u}_i=\mathbf{q}s(\mathbf{u}_i)$, and so
\[
\bu \approx \bu+\bu_i \approx \bu+\bq s(\mathbf{u}_i) \stackrel{\eqref{id0703}}\approx \bu+\bq^2\stackrel{\eqref{lt02}}\approx \bu+\bq^2+\bq.
\]
This implies the identity $\bu \approx \bu+\bq$.
Now suppose that $\ell(\bq)\geq2$. Then $\bq=h(\bq)s(\bq)$.
Consider the following two cases.

{\bf Case 1}. $\ell(\bu_i)\geq2$.  Then $\bu_i=h(\bq)s(\bu_i)$, where $s(\bu_i)$ is nonempty. Now we have
\[
\bu \approx \bu+\bu_i \approx \bu+h(\bq)s(\bu_i) \stackrel{\eqref{id0703}}\approx \bu+h(\bq)s(\bq)\approx  \bu+\bq.
\]

{\bf Case 2}. $\ell(\bu_i)=1$. Then $\bu_i=h(\bq)$. So we derive
\begin{align*}
\bu
&\approx \bu+\bu_i+\bu_j\\
&\approx \bu+\bu_i+h(\bu_j)s(\bu_j)\\
&\approx \bu+h(\bq)+h(\bu_j)h(\bu_j)&&(\text{by}~\eqref{id0703})\\
&\approx \bu+h(\bq)^2+h(\bu_j)h(\bu_j)&&(\text{by}~\eqref{lt03})\\
&\approx \bu+h(\bq)s(\bq)+h(\bu_j)h(\bu_j)&&(\text{by}~\eqref{id0703})\\
&\approx \bu+\bq+h(\bu_j)h(\bu_j).
\end{align*}
This implies the identity $\bu\approx \bu+\bq$.
\end{proof}

\begin{cor}\label{meet}
$\mathsf{V}(L_2, T_2)$ is the subvariety of $\mathsf{V}(S_{58})$ defined by the identity \eqref{lt03}.
\end{cor}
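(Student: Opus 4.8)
The plan is to derive this directly from Proposition~\ref{prolt} by comparing equational bases, so that no fresh calculation is needed. First I would recall from Table~\ref{3-element ai-semirings} that the equational basis of $S_{58}$ consists of $xy\approx xz$ and $x^2\approx x^2+x$, that is, of the identities \eqref{id0703} and \eqref{lt02}; hence $\mathsf{V}(S_{58})$ is precisely the ai-semiring variety they define.

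The subvariety of $\mathsf{V}(S_{58})$ cut out by \eqref{lt03} is then, by definition, the class of all ai-semirings satisfying \eqref{id0703}, \eqref{lt02}, and \eqref{lt03} simultaneously. Since $\mathbf{R}$ is exactly the ai-semiring variety defined by \eqref{id0703}, this class coincides with the subvariety of $\mathbf{R}$ defined by the pair of identities \eqref{lt02} and \eqref{lt03}. But Proposition~\ref{prolt} identifies that subvariety with $\mathsf{V}(L_2, T_2)$, which closes the argument. In particular this also yields the containment $\mathsf{V}(L_2, T_2)\subseteq\mathsf{V}(S_{58})$ for free, since $L_2$ and $T_2$ satisfy both defining identities of $\mathsf{V}(S_{58})$.

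I do not expect a genuine obstacle here: the corollary is a purely formal consequence of Proposition~\ref{prolt} together with the completeness of the stated basis of $S_{58}$. The one point worth confirming is exactly that completeness---that \eqref{id0703} and \eqref{lt02} really do axiomatize $\mathsf{V}(S_{58})$, as recorded in Table~\ref{3-element ai-semirings} and established in \cite{zrc}---so that imposing \eqref{lt03} on $\mathsf{V}(S_{58})$ introduces no hidden extra identities beyond \eqref{id0703}, \eqref{lt02}, and \eqref{lt03}.
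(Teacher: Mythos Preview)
Your proposal is correct and matches the paper's approach: the paper states the corollary without proof, treating it as an immediate consequence of Proposition~\ref{prolt} together with the equational basis of $S_{58}$ recorded in Table~\ref{3-element ai-semirings}, which is exactly the derivation you spell out.
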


\begin{pro}\label{proln}
$\mathsf{V}(L_2, N_2)$ is the subvariety of ${\bf R}$ defined by the identity
\begin{align}
x & \approx x+xy. \label{ln02}
\end{align}
\end{pro}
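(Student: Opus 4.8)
The plan is to follow the template of Proposition \ref{prolt}. For the soundness direction I would first check directly from the Cayley tables that $L_2$ and $N_2$ both satisfy \eqref{ln02}: in $L_2$ we have $xy\approx x$, so $x+xy\approx x+x\approx x$; in $N_2$ the product $xy$ is the additive zero, so again $x+xy\approx x$. Together with $L_2,N_2\in\mathbf{R}$ this gives the inclusion $\mathsf{V}(L_2,N_2)\subseteq\mathcal{W}$, where $\mathcal{W}$ denotes the subvariety of $\mathbf{R}$ defined by \eqref{ln02}.

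For the reverse inclusion it suffices to show that every nontrivial identity holding in both $L_2$ and $N_2$ can be derived from \eqref{ln02} and the identities defining $\mathbf{R}$. By the reduction technique recorded before Lemma \ref{nlemma1}, I may assume the identity has the form $\bu\approx\bu+\bq$ with $\bu=\bu_1+\cdots+\bu_n$ and $\bu_i,\bq\in X^+$. Lemma \ref{nlemma1}(1) and (3) then translate the hypothesis into two combinatorial facts: some summand $\bu_i$ satisfies $h(\bu_i)=h(\bq)$, and $\ell(\bq)\ge 2$.

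The engine of the proof is the observation that, inside $\mathbf{R}$, identity \eqref{id0703} collapses every word of length at least $2$ onto the square of its first letter; concretely, substituting into $xy\approx xz$ yields $\bp\approx h(\bp)^2$ whenever $\ell(\bp)\ge 2$. Applying this to $\bq$ gives $\bq\approx h(\bq)^2$. On the other side, applying \eqref{ln02} to the summand $\bu_i$ produces $\bu\approx\bu+\bu_i y$ for a variable $y$, and since $h(\bu_i y)=h(\bu_i)=h(\bq)$ with $\ell(\bu_i y)\ge 2$, the same collapse gives $\bu_i y\approx h(\bq)^2\approx\bq$. Chaining these yields $\bu\approx\bu+\bu_i y\approx\bu+\bq$, as required.

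Unlike the proof of Proposition \ref{prolt}, no case distinction on $\ell(\bu_i)$ is needed, because forming the product $\bu_i y$ automatically produces a word of length $\ge 2$ with head $h(\bq)$; this is exactly what makes \eqref{ln02} strong enough. The only point demanding care is the word-collapse step, which is where membership in $\mathbf{R}$ (equivalently, the availability of \eqref{id0703}) is used, together with the verification that the two conditions supplied by Lemma \ref{nlemma1} are precisely those needed to run the derivation. I do not anticipate a genuine obstacle beyond this bookkeeping.
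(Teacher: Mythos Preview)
Your argument is correct and follows the same overall strategy as the paper: verify soundness from the Cayley tables, reduce to identities of the form $\bu\approx\bu+\bq$, read off the combinatorial constraints from Lemma~\ref{nlemma1}, and then push $\bq$ into $\bu$ using \eqref{id0703} and \eqref{ln02}.

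The one point of difference is in the final derivation. The paper splits into two cases according to whether $\ell(\bu_i)\ge 2$ or $\ell(\bu_i)=1$: in the first case \eqref{id0703} alone suffices, and only in the second is \eqref{ln02} invoked to extend $\bu_i=h(\bq)$ to $h(\bq)s(\bq)$. You instead apply \eqref{ln02} uniformly to manufacture the longer word $\bu_i y$, and then let the $\mathbf{R}$-collapse $\bp\approx h(\bp)^2$ (for $\ell(\bp)\ge 2$) finish both cases at once. This is a genuine, if small, streamlining; the paper's case split is unnecessary once one notices that \eqref{ln02} can be applied regardless of the length of $\bu_i$. Either way the proof goes through without difficulty.
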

\begin{proof}
It is easy to verify that both $L_2$ and $N_2$ satisfy the identity \eqref{ln02}.
In what follows, it suffices to show that every ai-semiring identity that holds in both $L_2$ and $N_2$
can be derived by \eqref{ln02} and the identities defining $\bf{R}$.
Let $\bu \approx \bu+\bq$ be such a nontrivial identity,
where $\bu=\bu_1+\bu_2+\cdots+\bu_n$ with $\bu_i, \bq\in X^+$ for $1 \leq i \leq n$.
Then $\ell(\bq)\geq2$ and there exists $\bu_i\in \bu$ such that $h(\bu_i)=h(\bq)$.

If $\ell(\bu_i)\geq2$, then $\bu_i=h(\bq)s(\bu_i)$, where $s(\bu_i)$ is nonempty. So we have
\[
\bu \approx \bu+\bu_i \approx \bu+h(\bq)s(\bu_i) \stackrel{\eqref{id0703}}\approx \bu+h(\bq)s(\bq)\approx  \bu+\bq.
\]
If $\ell(\bu_i)=1$, then $\bu_i=h(\bq)$, and so
\[
\bu \approx \bu+\bu_i \approx \bu+h(\bq) \stackrel{\eqref{ln02}}\approx \bu+h(\bq)+h(\bq)s(\bq) \approx \bu+h(\bq)+\bq \approx  \bu+\bq.
\]
This completes the proof.
\end{proof}

\begin{pro}\label{pront}
$\mathsf{V}(N_2, T_2)$ is the ai-semiring variety defined by the identity
\begin{align}
x_1x_2 & \approx y_1y_2. \label{nt01}
\end{align}
\end{pro}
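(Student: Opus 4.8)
The plan is to establish the two inclusions between $\mathsf{V}(N_2,T_2)$ and the ai-semiring variety $\mathcal{V}$ defined by \eqref{nt01}. One inclusion is immediate: since \eqref{nt01} appears in the equational basis of each of $N_2$ and $T_2$ (Table~\ref{2-element ai-semirings}), both generators satisfy \eqref{nt01}, and hence $\mathsf{V}(N_2,T_2)\subseteq\mathcal{V}$. Before treating the reverse inclusion I would record the crucial feature of \eqref{nt01}: modulo this identity every word of length at least two is provably equal to the fixed word $y_1y_2$. Indeed, for $\bw\in X^+$ with $\ell(\bw)\ge 2$ we may write $\bw=h(\bw)s(\bw)$ with $s(\bw)$ nonempty, and substituting $x_1\mapsto h(\bw)$, $x_2\mapsto s(\bw)$ into \eqref{nt01} gives $\bw\approx y_1y_2$. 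Consequently any two words of length $\ge 2$ are identified by \eqref{nt01}.

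For the reverse inclusion it suffices to show that every nontrivial ai-semiring identity holding in both $N_2$ and $T_2$ is derivable from \eqref{nt01} together with the defining axioms of ai-semirings. By the reduction recalled in the preliminaries, it is enough to handle an identity $\bu\approx\bu+\bq$ with $\bu=\bu_1+\cdots+\bu_n$ and $\bu_i,\bq\in X^+$. By parts $(3)$ and $(4)$ of Lemma~\ref{nlemma1}, such an identity holds in $N_2$ precisely when $\ell(\bq)\ge 2$, and in $T_2$ precisely when $\ell(\bu_i)\ge 2$ for some $\bu_i\in\bu$. Fixing such a summand $\bu_i$, both $\bu_i$ and $\bq$ have length $\ge 2$, so the observation above yields $\bu_i\approx\bq$; therefore
\[
\bu\approx\bu+\bu_i\approx\bu+\bq,
\]
the first step by additive idempotency and the second by $\bu_i\approx\bq$. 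This is exactly the identity to be derived, completing the reverse inclusion.

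I expect no genuine obstacle in this case; it is the most direct of the present propositions. The one point to get right is the reduction to a single word $\bq$ on the right, together with the observation that \eqref{nt01} collapses all words of length $\ge 2$ to a common value. Once this is in place the derivation needs no case analysis, because the constraint imposed by $N_2$ (namely $\ell(\bq)\ge 2$) and the one imposed by $T_2$ (some $\ell(\bu_i)\ge 2$) place both relevant words in the collapsed class automatically. This is in contrast to Propositions~\ref{prolt} and~\ref{proln}, where one must branch according to whether $\ell(\bq)=1$ or on the behaviour of the head variable.
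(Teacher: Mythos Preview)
Your proof is correct and follows essentially the same approach as the paper: verify that $N_2$ and $T_2$ satisfy \eqref{nt01}, reduce to an identity $\bu\approx\bu+\bq$, use Lemma~\ref{nlemma1}(3),(4) to obtain $\ell(\bq)\ge 2$ and some $\ell(\bu_i)\ge 2$, and then replace $\bu_i$ by $\bq$ via \eqref{nt01}. The only difference is expository: you spell out explicitly why \eqref{nt01} collapses all words of length at least two, whereas the paper applies this in a single step.
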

\begin{proof}
It is straightforward to verify that both $N_2$ and $T_2$ satisfy the identity \eqref{nt01}.
To complete the proof, we need to show that every ai-semiring identity satisfied by both $N_2$ and $T_2$
can be derived by \eqref{nt01} and the identities defining the variety of all ai-semirings.
Let $\bu \approx \bu+\bq$ be such a nontrivial identity,
where $\bu=\bu_1+\bu_2+\cdots+\bu_n$ with $\bu_i, \bq\in X^+$ for $1 \leq i \leq n$.
Then $\ell(\bq)\geq2$ and $\ell(\bu_i)\geq2$ for some $\bu_i\in \bu$.
We then derive
\[
\bu \approx \bu+\bu_i \stackrel{\eqref{nt01}}\approx \bu+\bq.
\]
This completes the proof.
\end{proof}

\begin{pro}\label{prolnt}
$\mathsf{V}(L_2, N_2, T_2)$ is  the subvariety of ${\bf R}$ defined by the identity
\begin{align}
x+y^2\approx x+y^2+x^2. \label{lnt02}
\end{align}
\end{pro}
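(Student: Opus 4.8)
The plan is to follow the same two-part strategy used in Propositions \ref{prolt}--\ref{pront}. First I would check soundness, i.e. that each of $L_2$, $N_2$, $T_2$ satisfies \eqref{lnt02}: in $L_2$ one has $x^2\approx x$, so both sides collapse to $x+y$; in $N_2$ every square evaluates to the additive identity, so both sides equal $x$; and in $T_2$ every square evaluates to the additively absorbing top element, so both sides coincide. Hence $\mathsf{V}(L_2, N_2, T_2)$ is contained in the subvariety of $\mathbf{R}$ defined by \eqref{lnt02}, and it remains to prove the reverse inclusion.

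For completeness I would show that every nontrivial identity holding in all three semirings is derivable from \eqref{lnt02} together with the identities defining $\mathbf{R}$. Using the reduction described in the preliminaries, it suffices to treat an identity $\bu\approx\bu+\bq$ with $\bu=\bu_1+\cdots+\bu_n$ and $\bu_i,\bq\in X^+$. Since this identity holds in $L_2$, $N_2$, and $T_2$, parts $(1)$, $(3)$, $(4)$ of Lemma \ref{nlemma1} yield three facts at once: there is a summand $\bu_i$ with $h(\bu_i)=h(\bq)$; we have $\ell(\bq)\geq 2$; and there is a summand $\bu_j$ with $\ell(\bu_j)\geq 2$. The one auxiliary observation I would record up front is that in $\mathbf{R}$ every word $\bp$ with $\ell(\bp)\geq 2$ satisfies $\bp\approx h(\bp)^2$, which follows immediately from \eqref{id0703} applied to $\bp=h(\bp)s(\bp)$.

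I would then split on $\ell(\bu_i)$. If $\ell(\bu_i)\geq 2$, the argument is identical to Case 1 of Proposition \ref{prolt}: writing $\bu_i=h(\bq)s(\bu_i)$ with $s(\bu_i)$ nonempty, \eqref{id0703} gives $\bu\approx\bu+\bu_i\approx\bu+h(\bq)s(\bu_i)\approx\bu+h(\bq)s(\bq)\approx\bu+\bq$, so \eqref{lnt02} is not even needed. The interesting case, where I expect the main obstacle to lie, is $\ell(\bu_i)=1$, i.e. $\bu_i=h(\bq)$. Here I would invoke the summand $\bu_j$ of length $\geq 2$ supplied by the $T_2$-condition: by the auxiliary observation $\bu_j\approx h(\bu_j)^2$, so
\[
\bu\approx\bu+h(\bq)+h(\bu_j)^2
\stackrel{\eqref{lnt02}}{\approx}\bu+h(\bq)+h(\bu_j)^2+h(\bq)^2
\approx\bu+h(\bq)^2\approx\bu+\bq,
\]
where the first step absorbs $\bu_i$ and $\bu_j$ back into $\bu$, the middle step applies \eqref{lnt02} with $x=h(\bq)$ and $y=h(\bu_j)$, and the last step uses $h(\bq)^2\approx\bq$ since $\ell(\bq)\geq 2$. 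The crux is precisely the bookkeeping that guarantees all three hypotheses of Lemma \ref{nlemma1} hold simultaneously, so that a genuine square $h(\bu_j)^2$ is available to play the role of ``$y^2$'' in \eqref{lnt02} while the isolated letter $h(\bq)$ plays the role of ``$x$''; once both are present, the missing summand $\bq\approx h(\bq)^2$ is produced directly.
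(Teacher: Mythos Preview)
Your proposal is correct and follows essentially the same approach as the paper's proof: both verify soundness, reduce to identities of the form $\bu\approx\bu+\bq$, extract the three conditions from Lemma~\ref{nlemma1}, and split on $\ell(\bu_i)$, with the key step in the $\ell(\bu_i)=1$ case being to use the square $h(\bu_j)^2$ coming from the $T_2$-witness together with the lone letter $h(\bq)$ to instantiate \eqref{lnt02}. Your auxiliary observation $\bp\approx h(\bp)^2$ for $\ell(\bp)\ge 2$ is exactly what the paper does inline via \eqref{id0703}, so the two arguments are the same up to bookkeeping.
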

\begin{proof}
It is easy to verify that $L_2$, $N_2$, and $T_2$ satisfy the identity \eqref{lnt02}.
In the remainder it is enough to prove that every ai-semiring identity holding $L_2$, $N_2$ and $T_2$
can be derived by \eqref{lnt02} and the identities defining $\mathbf{R}$.
Let $\bu \approx \bu+\bq$ be such a nontrivial identity,
where $\bu=\bu_1+\bu_2+\cdots+\bu_n$ with $\bu_i, \bq\in X^+$ for $1 \leq i \leq n$.
Then $\ell(\bq)\geq2$, $h(\bu_i)=h(\bq)$ and $\ell(\bu_j)\geq2$ for some $\bu_i, \bu_j\in \bu$.

If $\ell(\bu_i)\geq2$, then $\bu_i=h(\bq)s(\bu_i)$, where $s(\bu_i)$ is nonempty.
So we have
\[
\bu \approx \bu+\bu_i \approx \bu+h(\bq)s(\bu_i) \stackrel{\eqref{id0703}}\approx \bu+h(\bq)s(\bq)\approx  \bu+\bq.
\]
If $\ell(\mathbf{u}_i)=1$, then $\mathbf{u}_i=h(\mathbf{q})$. So we obtain
\begin{align*}
\bu
&\approx \bu+\bu_i+\bu_j\\
&\approx \bu+\bu_i+h(\bu_j)s(\bu_j)\\
&\approx \bu+h(\bq)+h(\bu_j)h(\bu_j)&&(\text{by}~\eqref{id0703})\\
&\approx \bu+h(\bq)+h(\bu_j)h(\bu_j)+h(\bq)^2&&(\text{by}~\eqref{lnt02})\\
&\approx \bu+h(\bq)+h(\bu_j)h(\bu_j)+h(\bq)s(\bq)&&(\text{by}~\eqref{id0703})\\
&\approx \bu+h(\bq)+h(\bu_j)h(\bu_j)+\bq.
\end{align*}
This derives the identity $\bu \approx \bu+\bq$.
\end{proof}

\section{The subvariety lattice of the variety $\mathbf{R}$}
In this section we characterize the subvariety lattice of the variety $\mathbf{R}$.
The following result, which is due to Yue et al. \cite[Proposition 4.5, Remark 4.6]{yrzs},
shows that the variety $\mathbf{R}$ is finitely generated.
\begin{lem}\label{lem070201}
Both $S_{(4, 475)}$ and $\{S_{58}, N_2\}$ are generating sets for the variety $\mathbf{R}$.
\end{lem}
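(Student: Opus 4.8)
The plan is to prove both halves by the same strategy. Since $S_{58}$, $N_2$, and $S_{(4,475)}$ all lie in $\mathbf{R}$ (as already noted), we automatically have $\mathsf{V}(S_{58},N_2)\subseteq\mathbf{R}$ and $\mathsf{V}(S_{(4,475)})\subseteq\mathbf{R}$, so only the reverse inclusions need work. Equivalently, I would show that every nontrivial ai-semiring identity that is \emph{not} a consequence of the defining identities of $\mathbf{R}$ already fails in $N_2$ or $S_{58}$, and likewise fails in $S_{(4,475)}$. By the reduction recalled in the preliminaries it suffices to treat identities of the form $\bu\approx\bu+\bq$ with $\bu=\bu_1+\cdots+\bu_n$ and $\bu_i,\bq\in X^+$.

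First I would record a normal form. Applying \eqref{id0703} repeatedly, any word of length at least $2$ is provably equal to the square of its head variable, while a word of length $1$ is a single variable; hence modulo the laws of $\mathbf{R}$ each summand collapses to one of the atoms $x$ or $x^2$ with $x\in X$, addition is the idempotent commutative join, and (after checking that these reduced forms are pairwise distinct in $\mathbf{R}$) the identity $\bu\approx\bu+\bq$ holds in $\mathbf{R}$ precisely when the reduced atom of $\bq$ already occurs among the reduced atoms of $\bu$. So I may assume this fails, i.e.\ the atom of $\bq$ is new, and split into Case A, where $\ell(\bq)=1$ and the missing atom is $h(\bq)$, and Case B, where $\ell(\bq)\ge 2$ and the missing atom is $h(\bq)^2$.

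For the pair $\{S_{58},N_2\}$, Case A fails in $N_2$ immediately by Lemma~\ref{nlemma1}(3), since $\ell(\bq)=1$. In Case B I would evaluate in $S_{58}$ under the assignment sending $h(\bq)\mapsto 1$ and every other variable to $2$: each summand of $\bu$ then takes a value in $\{1,2\}$ (no summand has head $h(\bq)$ and length $\ge 2$), whereas $\bq$ evaluates to $1\cdot 1=3$, the top element, so the two sides differ. For the single algebra $S_{(4,475)}$ I would use one assignment for both cases, namely $h(\bq)\mapsto 4$ with all other variables sent to the bottom element $2$: the summands of $\bu$ then lie in $\{2,3\}$ in Case A and in $\{2,4\}$ in Case B, while adjoining $\bq$ contributes the value $4$ in Case A (namely $h(\bq)$ itself) and the value $4\cdot 4=3$ in Case B (as $\bq$ reduces to $h(\bq)^2$); a short check in the $2\times 2$ addition lattice of $S_{(4,475)}$ shows the value strictly increases in every sub-case.

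The main obstacle is the sub-case of Case B, for the single generator $S_{(4,475)}$, in which $\bu$ \emph{does} contain the lone head variable $h(\bq)$ as a summand but no longer word with that head. The naive choice $h(\bq)\mapsto{}$top does not separate the two sides, because in $S_{(4,475)}$ the top element absorbs its own square; the resolution is to exploit the incomparable element $4$, whose square $4\cdot 4=3$ is incomparable to $4$, so that adjoining $\bq$ genuinely raises the sum. Isolating this assignment, and verifying that the value of $h(\bq)^2$ never coincides with any summand value, is the only delicate point; everything else is routine bookkeeping with the normal form, after which $\mathbf{R}=\mathsf{V}(S_{58},N_2)=\mathsf{V}(S_{(4,475)})$ follows.
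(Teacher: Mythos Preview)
Your argument is correct. The normal form observation---every word of length at least $2$ is $\mathbf{R}$-equivalent to the square of its head, so the free $\mathbf{R}$-algebra on $X$ is spanned by the atoms $\{x,x^2:x\in X\}$---is exactly the right starting point, and the two cases together with the explicit evaluations you give really do falsify every non-derivable identity $\bu\approx\bu+\bq$ in the claimed generators. I checked the delicate sub-case you flagged: with $h(\bq)\mapsto 4$ and all other variables sent to $2$ in $S_{(4,475)}$, the summands of $\bu$ land in $\{2,4\}$ (no summand can produce $3$, since that would require a length-$\geq 2$ word with head $h(\bq)$, contrary to the Case~B hypothesis), while $\bq$ evaluates to $3$; adding $3$ to anything in $\{2,4\}$ gives $3$ or $1$, so the two sides differ. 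The Case~A check and the $S_{58}$ argument are likewise fine. One cosmetic point: when you write ``$\bq$ evaluates to $1\cdot 1=3$'' in $S_{58}$, the second factor need not be $1$, but since every row of the multiplication table is constant the value is $3$ regardless.

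As for comparison with the paper: the paper does not actually prove this lemma. It simply imports the statement from \cite{yrzs} (Proposition~4.5 and Remark~4.6 there), so there is no proof in the present paper to compare against. Your self-contained argument is therefore a genuine addition rather than a reworking of an existing proof; it is elementary and could easily stand in for the citation.
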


Next, we identify the minimal nontrivial subvarieties of $\mathbf{R}$.

\begin{pro}\label{LTN}
$\mathsf{V}(L_2)$, $\mathsf{V}(N_2)$ and $\mathsf{V}(T_2)$ are the only minimal nontrivial subvarieties of $\mathbf{R}$.
\end{pro}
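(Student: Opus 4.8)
The plan is to reduce everything to an explicit structural description of the algebras in $\mathbf{R}$ and then to locate two-element subalgebras isomorphic to $L_2$, $N_2$, or $T_2$. Since the identity \eqref{id0703} says that $xy$ does not depend on $y$, each $S\in\mathbf{R}$ carries a well-defined map $f\colon S\to S$ given by $f(x)=xy$ (for any $y$), with $xy=f(x)$. Associativity forces $f(f(x))=f(x)$ and right distributivity forces $f(x+y)=f(x)+f(y)$, so $f$ is an idempotent endomorphism of the additive semilattice $(S,+)$, and conversely any such $f$ yields a member of $\mathbf{R}$. I will use throughout that the image $f(S)$ is a subsemilattice consisting exactly of the multiplicatively idempotent elements (those $x$ with $x^2=x$), and that $f$ fixes $f(S)$ pointwise.

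First I would prove that each of the three varieties is an atom by showing it is generated by every one of its nontrivial members. If $S\in\mathsf{V}(L_2)$ is nontrivial, then $S$ satisfies $xy\approx x$, so $f=\mathrm{id}$; choosing two distinct elements and replacing one by their sum if necessary yields a comparable pair $a<b$, and then $\{a,b\}$ is closed under both operations with $a+b=b$, $ab=a$, $ba=b$, giving a subalgebra isomorphic to $L_2$. For $S\in\mathsf{V}(T_2)$ nontrivial, the basis $x_1x_2\approx y_1y_2$, $x^2\approx x^2+x$ forces multiplication to be a constant $c$ that is the top element, so for any $a\ne c$ the pair $\{a,c\}$ is a subalgebra isomorphic to $T_2$. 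The case $\mathsf{V}(N_2)$ is dual, with $c$ the bottom element and $\{c,a\}\cong N_2$. Hence $L_2\in\mathsf{V}(S)$ (resp.\ $N_2$, $T_2$) for every nontrivial member $S$, so each of these varieties has no proper nontrivial subvariety and is therefore minimal.

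Next I would show that every nontrivial subvariety $\mathbf{W}\subseteq\mathbf{R}$ contains one of the three, splitting on $|f(S)|$ for a nontrivial $S\in\mathbf{W}$. If $|f(S)|\ge 2$, choose a comparable pair $a<b$ in the subsemilattice $f(S)$ (possible in any semilattice with at least two elements); since $f$ fixes $f(S)$ we get $ab=a$ and $ba=b$, so $\{a,b\}\cong L_2$ and $L_2\in\mathsf{V}(S)\subseteq\mathbf{W}$. If $|f(S)|=1$, then multiplication is a constant $c$, and taking any $a\ne c$ we distinguish cases: if $c<a$ then $\{c,a\}\cong N_2$; if $a<c$ then $\{a,c\}\cong T_2$; and if $a,c$ are incomparable then $c<a+c$ gives $\{c,a+c\}\cong N_2$. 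In every case one of $L_2,N_2,T_2$ lies in $\mathbf{W}$. Combining this with the minimality established above, together with their pairwise distinctness (clear from their differing equational bases in Table~\ref{2-element ai-semirings}), shows that any minimal nontrivial subvariety must coincide with one of the three and that each is indeed minimal.

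The routine parts are the verifications that the indicated two-element sets are closed under $+$ and $\cdot$ and are isomorphic to the named semirings, which amount to reading off the Cayley tables in Table~\ref{2-element ai-semirings}. The one place that needs genuine care is the constant-multiplication subcase $|f(S)|=1$: one must pin down the order relationship between the absorbing product value $c$ and the remaining elements (using $a\le a+c$ in the incomparable case) in order to decide correctly between $N_2$ and $T_2$, and to ensure the chosen pair actually contains $c$ so that it is closed under the constant multiplication. Everything else reduces to elementary semilattice bookkeeping together with the characterization of $f$.
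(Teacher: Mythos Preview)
Your argument is correct, but it takes a genuinely different route from the paper's. The paper simply invokes Polin's classification of the minimal nontrivial varieties of ai-semirings (there are six: $\mathsf{V}(L_2)$, $\mathsf{V}(R_2)$, $\mathsf{V}(M_2)$, $\mathsf{V}(D_2)$, $\mathsf{V}(N_2)$, $\mathsf{V}(T_2)$) and then checks which of their generators satisfy \eqref{id0703}, obtaining $L_2$, $N_2$, $T_2$ and discarding the other three. Your proof is instead a self-contained structural argument: you recast each $S\in\mathbf{R}$ via the idempotent semilattice endomorphism $f(x)=xy$, and then locate inside any nontrivial $S$ a two-element subalgebra isomorphic to $L_2$ (when $|f(S)|\ge 2$) or to $N_2$ or $T_2$ (when multiplication is constant). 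The paper's approach is shorter because the heavy lifting is delegated to the literature; your approach buys independence from the external classification and gives a transparent, elementary reason \emph{why} exactly these three semirings show up, essentially anticipating the content of Propositions~\ref{L2}--\ref{T2} in miniature. Both arguments are valid; yours would be preferable in a setting where Polin's theorem is not being assumed.
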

\begin{proof}
From \cite[Theorem 1.1]{p} and \cite{sr}, we know that $\mathsf{V}(L_2)$, $\mathsf{V}(R_2)$, $\mathsf{V}(M_2)$, $\mathsf{V}(D_2)$,
$\mathsf{V}(N_2)$, and $\mathsf{V}(T_2)$ form a complete list of the minimal nontrivial subvarieties of the
variety of all ai-semirings. It is easy to see that $L_2$, $N_2$, and $T_2$ satisfy the identity \eqref{id0703},
but none of $R_2$, $M_2$, and $D_2$ satisfy \eqref{id0703}.
Therefore, $\mathsf{V}(L_2)$, $\mathsf{V}(N_2)$, and $\mathsf{V}(T_2)$ are the only minimal nontrivial subvarieties of $\mathbf{R}$.
\end{proof}

\begin{pro}\label{L2}
Let $\mathcal{V}$ be a subvariety of $\mathbf{R}$.
Then $\mathcal{V}$ does not contain $L_2$ if and only if $\mathcal{V}$ satisfies the identity
\begin{align}
x^2\approx x^2+y^2.\label{L}
\end{align}
\end{pro}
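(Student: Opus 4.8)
The plan is to prove the two implications separately; the reverse one is immediate, and the forward one is the crux, handled by constructing a surjection onto $L_2$. For the easy direction, observe that in $L_2$ we have $x^2=x$, so \eqref{L} collapses to $x\approx x+y$, which fails at $x=0$, $y=1$; thus $L_2$ does not satisfy \eqref{L}. Consequently, if $\mathcal{V}$ satisfies \eqref{L}, then no member of $\mathcal{V}$ is $L_2$, so $\mathcal{V}$ does not contain $L_2$. It therefore remains to prove the contrapositive of the converse: \emph{if $\mathcal{V}$ does not satisfy \eqref{L}, then $L_2\in\mathcal{V}$.}

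Next I would record the structure of $\mathbf{R}$ forced by \eqref{id0703}. Substituting $z=x$ in \eqref{id0703} gives $xy\approx x^2$, so in any $S\in\mathbf{R}$ every product equals the square of its first factor; from this one deduces $x^4\approx x^2$ and, via distributivity, $(x+y)^2\approx x^2+y^2$. Hence the map $x\mapsto x^2$ is an idempotent endomorphism of the additive semilattice $(S,+)$, and in particular is order-preserving for the natural order $\le$. In these terms, $S$ satisfies \eqref{L} exactly when all squares of $S$ coincide, so the failure of \eqref{L} in $\mathcal{V}$ yields some $S\in\mathcal{V}$ and $a,b\in S$ with $a^2\neq a^2+b^2$, that is, $b^2\not\le a^2$.

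Finally I would construct the homomorphism. Set $I=\{x\in S: x^2\le a^2\}$. Using $(x+y)^2\approx x^2+y^2$ and the monotonicity of squaring, $I$ is closed downward and under $+$, and it is a proper nonempty subset, since $a\in I$ while $b\notin I$. The decisive point is the equivalence $x\in I\iff x^2\in I$, which holds because $x^4\approx x^2$ gives $(x^2)^2\le a^2\iff x^2\le a^2$. Define $\varphi\colon S\to L_2$ by $\varphi(x)=0$ if $x\in I$ and $\varphi(x)=1$ otherwise. Then $\varphi$ is the semilattice quotient determined by the ideal $I$, so it preserves $+$; and it preserves $\cdot$ because $\varphi(xy)=\varphi(x^2)=\varphi(x)$, while in $L_2$ one has $\varphi(x)\varphi(y)=\varphi(x)$. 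Thus $\varphi$ is a surjective semiring homomorphism, $L_2$ is a homomorphic image of $S$, and $L_2\in\mathcal{V}$.

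The step I expect to be the main obstacle is the choice of $I$. The naive candidate, the principal ideal $\{x:x\le a^2\}$, is proper, downward closed and closed under $+$, but it fails the squaring condition $x\in I\iff x^2\in I$ (the implication $x^2\le a^2\Rightarrow x\le a^2$ can fail), and this condition is exactly what forces $\varphi$ to be multiplicative. Passing instead to $\{x:x^2\le a^2\}$ and exploiting $x^4\approx x^2$ is what resolves the difficulty.
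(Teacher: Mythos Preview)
Your argument is correct, but the paper takes a much shorter route. Rather than building a quotient, the paper simply observes that the two-element set $\{a^2,\,a^2+b^2\}$ is already a subsemiring of $S$ isomorphic to $L_2$: using $xy\approx x^2$ and $(x+y)^2\approx x^2+y^2$ (both of which you derived), one checks that $a^2\cdot s=a^4=a^2$ and $(a^2+b^2)\cdot s=(a^2+b^2)^2=a^4+b^4=a^2+b^2$ for any $s$, while the additive structure is the obvious two-element semilattice with top $a^2+b^2$. So $L_2$ sits inside $S$ as a subalgebra, and no quotient construction is needed.

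What your approach buys is a template that would still work when the target semiring is not obviously realizable as a subalgebra; the ideal $I=\{x:x^2\le a^2\}$ and the key equivalence $x\in I\iff x^2\in I$ via $x^4\approx x^2$ are genuinely the right ingredients for producing a multiplicative semilattice quotient in $\mathbf{R}$. But for this particular proposition the two elements $a^2$ and $a^2+b^2$ already lie in the image of squaring, where multiplication is idempotent and left-trivial, so the subalgebra argument is both available and decisively simpler.
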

\begin{proof}
Suppose that $\mathcal{V}$ satisfies \eqref{L}.
Since the identity \eqref{L} does not hold in $L_2$,
it follows immediately that $\mathcal{V}$ does not contain $L_2$.
Conversely, assume that $\mathcal{V}$ does not satisfy the identity \eqref{L}.
Then there exists a semiring $S$ in $\mathcal{V}$ such that $a^2\neq a^2+b^2$ for some $a, b\in S$.
One can easily verify that $\{a^2, a^2+b^2\}$ is isomorphic to $L_2$.
Thus $\mathcal{V}$ contains $L_2$ as required.
\end{proof}

\begin{pro}\label{N2}
Let $\mathcal{V}$ be a subvariety of $\mathbf{R}$.
Then $\mathcal{V}$ does not contain $N_2$ if and only if $\mathcal{V}$ satisfies the identity
\begin{align}
x^2\approx x^2+x.\label{N}
\end{align}
\end{pro}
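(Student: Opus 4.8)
The plan is to follow the template of Proposition~\ref{L2}: the forward direction is a one-line check, and the reverse direction is handled contrapositively by exhibiting an explicit copy of $N_2$ as a two-element subalgebra.

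For the forward implication, recall that in $N_2$ every product equals the additive zero, so under any interpretation $x^2$ evaluates to $0$ while $x^2+x$ evaluates to $x$. Taking $x=1$ gives $0\neq 1$, so \eqref{N} fails in $N_2$. Consequently, if $\mathcal{V}$ satisfies \eqref{N} then $\mathcal{V}$ cannot contain $N_2$.

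For the converse, I would assume \eqref{N} fails in $\mathcal{V}$ and produce $S\in\mathcal{V}$ together with an element $a\in S$ such that $a^2\neq a^2+a$. Writing $c=a^2$ and $d=a^2+a$, I claim $\{c,d\}$ is a subalgebra of $S$ isomorphic to $N_2$. The additive part is routine: idempotency and commutativity of $+$ force $c+c=c$, $c+d=d+c=d$, and $d+d=d$, so $\{c,d\}$ is the two-element semilattice with bottom $c$, matching the additive table of $N_2$. The real content lies in the multiplication. Using \eqref{id0703}, the product $uv$ depends only on its left factor; associativity together with \eqref{id0703} yields $a^2\cdot a=a(a\cdot a)=a\cdot a^2=a\cdot a=a^2$, whence $a^2 v=a^2$ for every $v$. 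Distributivity then gives $(a^2+a)v=a^2v+av=a^2+a^2=a^2$, so every product of elements of $\{c,d\}$ equals $c$. This reproduces exactly the constant multiplication of $N_2$, and the assignment $0\mapsto c$, $1\mapsto d$ is an isomorphism. Since varieties are closed under subalgebras, $N_2\in\mathcal{V}$, completing the contrapositive.

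I expect the only delicate step to be the multiplicative computation: one must not take $a^2\cdot a=a^2$ for granted but derive it from \eqref{id0703} and associativity, after which the collapse of all products to $a^2$ is forced by distributivity. Everything else is bookkeeping with the semilattice order already introduced in the preliminaries.
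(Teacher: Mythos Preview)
Your proof is correct and follows exactly the same approach as the paper: verify that $N_2$ fails \eqref{N} for the forward direction, and for the converse show contrapositively that $\{a^2,\,a^2+a\}$ is a subalgebra isomorphic to $N_2$. The paper merely asserts this isomorphism without justification, whereas you spell out the multiplicative computation; your added care in deriving $a^2\cdot a=a^2$ from associativity and \eqref{id0703} is appropriate and correct.
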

\begin{proof}
Suppose that $\mathcal{V}$ satisfies \eqref{N}.
Since the identity \eqref{N} is not satisfied by $N_2$, we have that $\mathcal{V}$ does not contain $N_2$.
Now assume that $\mathcal{V}$ does not satisfy the identity \eqref{N}.
Then there exists a member $S$ of $\mathcal{V}$ such that $a^2\neq a^2+a$ for some $a\in S$.
This implies that $\{a^2, a^2+a\}$ is isomorphic to $N_2$.
Hence $\mathcal{V}$ contains $N_2$.
\end{proof}

\begin{pro}\label{T2}
Let $\mathcal{V}$ be a subvariety of $\mathbf{R}$.
Then $\mathcal{V}$ does not contain $T_2$ if and only if $\mathcal{V}$ satisfies the identity
\begin{align}
x\approx x+x^2.\label{T}
\end{align}
\end{pro}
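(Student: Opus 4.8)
The plan is to follow the template of Propositions~\ref{L2} and~\ref{N2} for the easy direction, but to deviate from it in an essential way for the converse. For the easy direction, one checks that \eqref{T} does not hold in $T_2$ (there $0+0^2=0+1=1\neq 0$), so if $\mathcal{V}$ satisfies \eqref{T} then $T_2\notin\mathcal{V}$. Two structural facts about $\mathbf{R}$ will drive the converse. Putting $z:=x$ in \eqref{id0703} gives $xy\approx x^2$, so in any member of $\mathbf{R}$ a product depends only on its first factor and equals that factor's square; moreover, associativity combined with this identity forces $(x^2)^2\approx x^2$, i.e.\ every square is multiplicatively idempotent. I would record both of these at the start.

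For the converse I would argue the contrapositive. Suppose \eqref{T} fails in $\mathcal{V}$, so there are $S\in\mathcal{V}$ and $a\in S$ with $a+a^2\neq a$. Since $a^k=a^2$ for all $k\ge 2$, the subsemiring $A:=\langle a\rangle$ equals $\{a,\,a^2,\,a+a^2\}$, and a short distributive computation using $(a^2)^2=a^2$ shows that \emph{every} product of elements of $A$ equals $a^2$; in particular $(a+a^2)^2=a^2$. I would then define $f\colon A\to T_2$ by $f(a)=0$ and $f(x)=1$ for $x\neq a$, and verify that it is a surjective semiring homomorphism. On the multiplicative side this is immediate: every product in $A$ lands in $a^2\neq a$, so $f$ of any product is $1$, matching the constant top of $T_2$. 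On the additive side the only nontrivial point is that $x+y=a$ forces $x=y=a$; this holds because the only element of $A$ lying $\le a$ is $a$ itself, which is precisely the content of $a+a^2\neq a$. Hence $T_2$ is a homomorphic image of $A\in\mathcal{V}$, whence $T_2\in\mathcal{V}$.

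The main obstacle is that, unlike for $L_2$ and $N_2$, one cannot realise $T_2$ as a \emph{subalgebra} of the witnessing semiring. A copy of $T_2$ inside a member of $\mathbf{R}$ forces a two-element chain $\{p<q\}$ with $p^2=q$, i.e.\ an element $p$ with $p<p^2$, and no such element need exist even when \eqref{T} fails. For instance the generator $S_{(4,475)}$ of $\mathbf{R}$ violates \eqref{T} (at the element $4$, since $4+4^2=4+3=1\neq 4$), yet no element $p$ of $S_{(4,475)}$ satisfies $p<p^2$: for every element one has $p^2\le p$ or $p^2$ incomparable to $p$. This is exactly why the argument must pass to a homomorphic image rather than a subalgebra.

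Accordingly, the delicate step I expect to spend care on is the additive compatibility of $f$, which amounts to showing that $a$ is the unique element of $A$ weakly below $a$. Once the two ambient identities $xy\approx x^2$ and $(x^2)^2\approx x^2$ of $\mathbf{R}$ are in hand, the description of $A=\{a,a^2,a+a^2\}$ and the constancy of multiplication on $A$ are routine, and the homomorphism check reduces to this single order-theoretic observation. I would present the forward implication first, then the identities, then the construction of $A$ and of $f$, closing with the remark above to explain why a subalgebra argument cannot succeed here.
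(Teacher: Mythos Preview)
Your argument is correct. The paper's own proof follows the same contrapositive strategy but presents it via a case split on whether $a\le a^2$: when $a\le a^2$ it notes that $\{a,a^2\}$ is already a subalgebra of $S$ isomorphic to $T_2$, and when $a\nleq a^2$ it forms the $3$-element subalgebra $\{a^2,a,a^2+a\}$ and passes to the quotient by the ideal--filter $I=\{a^2,a^2+a\}$ to obtain $T_2$. Your single map $f\colon A\to T_2$ with kernel classes $\{a\}$ and $A\setminus\{a\}$ handles both cases at once: when $a\le a^2$ the set $A$ has two elements and $f$ is an isomorphism (recovering the paper's subalgebra case), and when $a\nleq a^2$ your $f$ is exactly the paper's quotient map. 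So the two proofs are really the same, with yours packaged more uniformly; your closing remark that $T_2$ need not occur as a subalgebra, witnessed by $S_{(4,475)}$, is accurate and is precisely why the paper, too, resorts to a quotient in its second case.
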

\begin{proof}
Suppose that $\mathcal{V}$ satisfies \eqref{T}.
Since this identity is not true in $T_2$, it follows that $\mathcal{V}$ does not contain $T_2$.
Conversely, assume that $\mathcal{V}$ does not satisfy the identity \eqref{T}.
Then there exists a semiring $S$ in $\mathcal{V}$ such that $a\neq a+a^2$ for some $a\in S$, and so $a^2\nleq a$.
If $a \leq a^2$, then $\{a^2, a\}$ is isomorphic to $T_2$.
If $a\nleq a^2$, then $\{a^2, a, a^2+a\}$ forms a $3$-element subalgebra of $S$.
Let $I$ denote the set $\{a^2, a^2+a\}$.
It is easy to see that $I$ is
both a multiplicative ideal and an order-theoretic filter of $\{a^2, a, a^2+a\}$.
Furthermore, the quotient algebra
$\{a^2, a, a^2+a\}/I$ is isomorphic to $T_2$. Thus $\mathcal{V}$ contains $T_2$.
\end{proof}

%The following result, which is due to \cite{zrc}, \cite{sr} and \cite[proposition 4.5]{yrzs},
%provides finite equational basis for $S_5$, $S_{17}$, $S_{24}$, $S_{58}$, $S_{(4, 309)}$ and $S_{(4, 475)}$.
%Moreover, it is easy to see that $\mathsf{V}(S_5)=\mathsf{V}(L_2, T_2)$, $\mathsf{V}(S_{12})=\mathsf{V}(R_2, T_2)$,
%$\mathsf{V}(S_{17})=\mathsf{V}(N_2, T_2)$, $\mathsf{V}(S_{24})=\mathsf{V}(L_2, N_2)$, $\mathsf{V}(S_{32})=\mathsf{V}(R_2, N_2)$,
%$\mathsf{V}(S_{(4,189)})=\mathsf{V}(R_2, T_2, N_2)$ and $\mathsf{V}(S_{(4,309)})=\mathsf{V}(L_2, T_2, N_2)$.

Let $\mathsf{V}(L_2, N_2, T_2)$ denote the variety generated by $L_2$, $N_2$ and $T_2$.
From \cite{sr} we know that the lattice $\mathcal{L}(\mathsf{V}(L_2, N_2, T_2))$ of subvarieties of $\mathsf{V}(L_2, N_2, T_2)$
contains exactly $8$ varieties: $\mathsf{V}(L_2, N_2, T_2)$, $\mathsf{V}(L_2, N_2)$, $\mathsf{V}(L_2, T_2)$,
$\mathsf{V}(N_2, T_2)$, $\mathsf{V}(L_2)$, $\mathsf{V}(T_2)$, $\mathsf{V}(N_2)$,
and the trivial variety $\mathbf{T}$, which are all finitely based.
To characterize the lattice $\mathcal{L}(\mathbf{R})$ of subvarieties of $\mathbf{R}$,
it is natural to consider the mapping
\[
\varphi: \mathcal{L}(\mathbf{R})\rightarrow \mathcal{L}(\mathsf{V}(L_2, N_2, T_2)),
~\mathcal{V}\mapsto \mathcal{V}\cap \mathsf{V}(L_2, N_2, T_2).
\]
Then $\varphi$ is surjective, and so $\mathcal{L}(\mathbf{R})$ is the disjoint union of
$\varphi^{-1}(\mathbf{T})$, $\varphi^{-1}(\mathsf{V}(L_2))$, $\varphi^{-1}(\mathsf{V}(N_2))$,
$\varphi^{-1}(\mathsf{V}(T_2))$, $\varphi^{-1}(\mathsf{V}(L_2, N_2))$, $\varphi^{-1}(\mathsf{V}(T_2, N_2))$,
$\varphi^{-1}(\mathsf{V}(L_2, T_2))$, and $\varphi^{-1}(\mathsf{V}(L_2, N_2, T_2))$.

\begin{pro}\label{8}
\hspace*{\fill}
\begin{itemize}
\item[$(1)$] $\varphi^{-1}(\mathbf{T})=\{\mathbf{T}\}$.

\item[$(2)$] $\varphi^{-1}(\mathsf{V}(L_2))=\{\mathsf{V}(L_2)\}$.

\item[$(3)$] $\varphi^{-1}(\mathsf{V}(N_2))=\{\mathsf{V}(N_2)\}$.

\item[$(4)$] $\varphi^{-1}(\mathsf{V}(T_2))=\{\mathsf{V}(T_2)\}$.

\item[$(5)$] $\varphi^{-1}(\mathsf{V}(L_2, N_2))=\{\mathsf{V}(L_2, N_2)\}$.

\item[$(6)$] $\varphi^{-1}(\mathsf{V}(N_2, T_2))=\{\mathsf{V}(N_2, T_2)\}$.

\item[$(7)$] $\varphi^{-1}(\mathsf{V}(L_2, T_2))=\{\mathsf{V}(L_2, T_2), \mathsf{V}(S_{58})\}$.

\item[$(8)$] $\varphi^{-1}(\mathsf{V}(L_2, N_2, T_2))=\{\mathsf{V}(L_2, N_2, T_2), \mathbf{R}\}$.
\end{itemize}
\end{pro}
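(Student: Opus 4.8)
The plan is to analyse $\varphi$ one fibre at a time. The key structural observation is that $\mathcal{L}(\mathsf{V}(L_2,N_2,T_2))$ is the Boolean lattice $2^{\{L_2,N_2,T_2\}}$: each of its eight members is the join of the atoms $\mathsf{V}(L_2),\mathsf{V}(N_2),\mathsf{V}(T_2)$ that it contains. Since by Proposition \ref{LTN} these are the only minimal nontrivial subvarieties of $\mathbf{R}$, for any $\mathcal{V}\in\mathcal{L}(\mathbf{R})$ the variety $\varphi(\mathcal{V})=\mathcal{V}\cap\mathsf{V}(L_2,N_2,T_2)$ is generated by exactly those of $L_2,N_2,T_2$ lying in $\mathcal{V}$. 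By Propositions \ref{L2}, \ref{N2} and \ref{T2} the conditions $L_2\notin\mathcal{V}$, $N_2\notin\mathcal{V}$, $T_2\notin\mathcal{V}$ are equivalent to $\mathcal{V}$ satisfying \eqref{L}, \eqref{N}, \eqref{T} respectively. Hence, writing $\mathsf{V}(\mathcal{A})$ for the variety generated by a subset $\mathcal{A}\subseteq\{L_2,N_2,T_2\}$ and $\mathcal{W}_{\mathcal{A}}$ for the subvariety of $\mathbf{R}$ defined by those of \eqref{L}, \eqref{N}, \eqref{T} forbidding the atoms outside $\mathcal{A}$, a subvariety of $\mathbf{R}$ has atom content exactly $\mathcal{A}$ if and only if it lies in the interval $[\mathsf{V}(\mathcal{A}),\mathcal{W}_{\mathcal{A}}]$. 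Thus each fibre $\varphi^{-1}(\mathsf{V}(\mathcal{A}))$ equals this interval, and the whole statement reduces to computing the two endpoints and the length of each interval.

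For six values of $\mathcal{A}$ I would show the interval collapses, i.e. $\mathsf{V}(\mathcal{A})=\mathcal{W}_{\mathcal{A}}$, by a short derivation using repeatedly the $\mathbf{R}$-identities $pq\approx p^2$ and $p^{n}\approx p^{2}$ for $n\ge 2$. When $\mathcal{A}=\varnothing$, Proposition \ref{LTN} gives $\mathcal{W}_{\varnothing}=\mathbf{T}$, which is (1). When $\mathcal{A}=\{L_2\}$, identities \eqref{N} and \eqref{T} together force $x\approx x^2$, whence $xy\approx x^2\approx x$, the basis of $L_2$; this is (2). Identity \eqref{L} forces all squares to coincide and hence $x_1x_2\approx y_1y_2$, which, combined as needed with \eqref{N} or \eqref{T}, recovers the bases of $N_2$, $T_2$ and (via Proposition \ref{pront}) of $\mathsf{V}(N_2,T_2)$, giving (3), (4) and (6). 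When $\mathcal{A}=\{L_2,N_2\}$, identity \eqref{T} yields $x+xy\approx x+x^2\approx x$, which is \eqref{ln02}, so $\mathcal{W}_{\{L_2,N_2\}}=\mathsf{V}(L_2,N_2)$ by Proposition \ref{proln}, giving (5).

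There remain the two exceptional fibres, where the interval has length one. In case (7) the only forbidden atom is $N_2$, so $\mathcal{W}_{\{L_2,T_2\}}$ is the subvariety of $\mathbf{R}$ defined by \eqref{N}, that is, the variety defined by \eqref{id0703} together with $x^2\approx x^2+x$; this is precisely $\mathsf{V}(S_{58})$, and by Corollary \ref{meet} its proper subvariety $\mathsf{V}(L_2,T_2)$ is cut out by \eqref{lt03}. In case (8) no atom is forbidden, so $\mathcal{W}_{\{L_2,N_2,T_2\}}=\mathbf{R}$, while $\mathsf{V}(L_2,N_2,T_2)$ is cut out inside $\mathbf{R}$ by \eqref{lnt02} (Proposition \ref{prolnt}). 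In both cases it remains to prove that no variety lies strictly between the two endpoints, equivalently that any member $\mathcal{W}$ of the fibre failing the lower identity already contains the relevant generator. The plan is to extract that generator from a witness: suppose $S\in\mathcal{W}$ and $a,b\in S$ witness the failure. Using $pq\approx p^2$ and distributivity one computes the basic relation $(a+b^2)^2\approx a^2+b^2$ in $\mathbf{R}$, while $a^2,b^2$ are multiplicatively idempotent; since every multiplicative row is constant, the multiplicative structure on any small subset built from $a$ and $b^2$ matches that of $S_{58}$ or $S_{(4,475)}$ automatically.

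For case (7) the witness gives $a+b^2\ne a^2+b^2$, and \eqref{N} gives $a\le a^2$. I would then verify that $a+b^2$, $b^2$ and $a^2+b^2$ are automatically pairwise distinct and form a subalgebra isomorphic to $S_{58}$, so $S_{58}\in\mathcal{W}$ and $\mathcal{W}=\mathsf{V}(S_{58})$. For case (8) the witness gives $a+b^2\ne a+b^2+a^2$; here I would consider the four elements $a+b^2$, $b^2$, $a^2+b^2$, $a+a^2+b^2$, which are closed under both operations and, when distinct, form a subalgebra isomorphic to $S_{(4,475)}$; since $S_{(4,475)}$ generates $\mathbf{R}$ by Lemma \ref{lem070201}, this forces $\mathcal{W}=\mathbf{R}$. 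I expect this last step to be the main obstacle: establishing the distinctness of the elements and the claimed isomorphisms, and, in case (8), disposing of the degenerate configurations in which two of the four elements coincide. In those degenerate cases one collapses instead to a copy of $S_{58}$, and then $\mathbf{R}=\mathsf{V}(S_{58},N_2)$ (Lemma \ref{lem070201}) together with $N_2\in\mathcal{W}$ again yields $\mathcal{W}=\mathbf{R}$.
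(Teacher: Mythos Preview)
Your proposal is correct and follows essentially the same approach as the paper: case-by-case analysis of the eight fibres, using Propositions~\ref{L2}--\ref{T2} to bound each $\mathcal V$ above by identities and then, in the two exceptional fibres, extracting a copy of $S_{58}$ or $S_{(4,475)}$ from a witness to the failure of \eqref{lt03} or \eqref{lnt02}. Your interval framing $[\mathsf V(\mathcal A),\mathcal W_{\mathcal A}]$ is a tidy unification the paper leaves implicit, and your anticipated ``degenerate configurations'' in case~(8) reduce to the single dichotomy $a^2+b^2=a+a^2+b^2$ versus $a^2+b^2\neq a+a^2+b^2$ that the paper uses, since the witness $a^2\nleq a+b^2$ already forces $b^2,\;a+b^2,\;a^2+b^2$ to be pairwise distinct.
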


\begin{proof}
$(1)$ This follows from proposition \ref{LTN} immediately.

$(2)$ It is easy to see that $\mathsf{V}(L_2)$ is a member of $\varphi^{-1}(\mathsf{V}(L_2))$.
Now let $\mathcal{V}$ be an arbitrary variety in $\varphi^{-1}(\mathsf{V}(L_2))$.
Then $\mathcal{V}$ contains $L_2$, but does not contain $N_2$ or $T_2$.
By Propositions \ref{N2} and \ref{T2}, we obtain that $\mathcal{V}$ satisfies the identities \eqref{N} and \eqref{T},
and so $\mathcal{V}$ also satisfies the identity $x^2 \approx x$.
By using the identity \eqref{id0703},
it follows that $\mathcal{V}$ satisfies $xy\approx x$,
which is an equational basis of $\mathsf{V}(L_2)$.
Thus $\mathcal{V}=\mathsf{V}(L_2)$ and so $\varphi^{-1}(\mathsf{V}(L_2))=\{\mathsf{V}(L_2)\}$.

$(3)$  It is immediate that that $\mathsf{V}(N_2)$ is a member of $\varphi^{-1}(\mathsf{V}(N_2))$.
Now suppose that $\mathcal{V}$ is an arbitrary variety in $\varphi^{-1}(\mathsf{V}(N_2))$.
Then $\mathcal{V}$ contains $N_2$, but does not contain $L_2$ or $T_2$.
By Propositions~\ref{L2} and \ref{T2}, $\mathcal{V}$ satisfies the identities \eqref{L} and \eqref{T}.
Furthermore, we establish the following derivation:
\[
x_1x_2\stackrel{\eqref{id0703}}\approx x_1x_1 \stackrel{\eqref{L}}\approx x_1x_1+y_1y_1
\approx y_1y_1+x_1x_1 \stackrel{\eqref{L}}\approx y_1y_1\stackrel{\eqref{id0703}}\approx y_1y_2.
\]
Consequently,  $\mathcal{V}$ satisfies the identities $x_1x_2\approx y_1y_2$ and \eqref{T},
which form an equational basis of $\mathsf{V}(N_2)$.
Hence $\mathcal{V}=\mathsf{V}(N_2)$ and so $\varphi^{-1}(\mathsf{V}(N_2))=\{\mathsf{V}(N_2)\}$.

$(4)$ We first observe that $\mathsf{V}(T_2)$ lies in $\varphi^{-1}(\mathsf{V}(T_2))$.
Consider an arbitrary variety $\mathcal{V}$ in $\varphi^{-1}(\mathsf{V}(T_2))$.
Then $\mathcal{V}$ contains $T_2$, but does not contain $L_2$ or $N_2$.
It follows from Propositions \ref{L2} and \ref{N2} that
$\mathcal{V}$ satisfies the identities \eqref{L} and \eqref{N}.
Through the identity \eqref{id0703}, one can deduce that
$\mathcal{V}$ satisfies the identities $x_1x_2\approx y_1y_2$ and \eqref{N},
which constitute an equational basis of $\mathsf{V}(T_2)$.
We conclude that $\mathcal{V}=\mathsf{V}(T_2)$.
Consequently, $\varphi^{-1}(\mathsf{V}(T_2))=\{\mathsf{V}(T_2)\}$.

$(5)$
It is evident that $\mathsf{V}(L_2, N_2)$ is in $\varphi^{-1}(\mathsf{V}(L_2, N_2))$.
Now let $\mathcal{V}$ be an arbitrary variety in $\varphi^{-1}(\mathsf{V}(L_2, N_2))$.
Then $\mathcal{V}$ contains $L_2$ and $N_2$, but does not contain $T_2$.
By Proposition~\ref{T2} we have that $\mathcal{V}$ satisfies \eqref{T}.
Furthermore, using the identity \eqref{id0703}, we conclude that $\mathcal{V}$ satisfies the identity \eqref{ln02}.
It then  follows from Proposition \ref{proln} that
$\mathcal{V}=\mathsf{V}(L_2, N_2)$. Therefore, $\varphi^{-1}(\mathsf{V}(L_2, N_2))=\{\mathsf{V}(L_2, N_2)\}$.

$(6)$
It is easy to see that $\mathsf{V}(N_2, T_2)$ is in $\varphi^{-1}(\mathsf{V}(N_2, T_2))$.
Now let $\mathcal{V}$ be an arbitrary variety in $\varphi^{-1}(\mathsf{V}(N_2, T_2))$.
Then $\mathcal{V}$ contains $N_2$ and $T_2$, but does not contain $L_2$.
By Proposition~\ref{L2}, the variety $\mathcal{V}$ satisfies the identity \eqref{L}.
Furthermore, through the identity \eqref{id0703}, one can establish that $\mathcal{V}$ satisfies the identity \eqref{nt01}.
Applying Proposition \ref{pront}, we conclude that
$\mathcal{V}=\mathsf{V}(N_2, T_2)$.
Thus $\varphi^{-1}(\mathsf{V}(N_2, T_2))=\{\mathsf{V}(N_2, T_2)\}$.

$(7)$
First, we can directly verify that both $\mathsf{V}(L_2, T_2)$ and $\mathsf{V}(S_{58})$ belong to
the set $\varphi^{-1}(\mathsf{V}(L_2, T_2))$.
Now consider an arbitrary variety $\mathcal{V}$ in $\varphi^{-1}(\mathsf{V}(L_2, T_2))$.
Then $\mathcal{V}$ contains $L_2$ and $T_2$, but does not contain $N_2$.
Proposition \ref{N2} immediately yields that $\mathcal{V}$ satisfies the identity \eqref{N}.
We conclude that $\mathcal{V}$ lies in the interval $[\mathsf{V}(L_2, T_2), \mathsf{V}(S_{58})]$.
If $\mathcal{V}\neq\mathsf{V}(L_2, T_2)$, then by Corollary \ref{meet} that
$\mathcal{V}$ does not satisfy the identity \eqref{lt03}.
So there exists a semiring $S$ in $\mathcal{V}$ such that $a+b^2\neq a^2+b^2$ for some $a, b\in S$.
Through careful application of identities \eqref{id0703} and \eqref{N},
one can establish that $\{a^2+b^2, a+b^2, b^2\}$ is isomorphic to $S_{58}$.
Thus $\mathcal{V}=\mathsf{V}(S_{58})$ and so
$\varphi^{-1}(\mathsf{V}(L_2, T_2))=\{\mathsf{V}(L_2, T_2), \mathsf{V}(S_{58})\}$.

$(8)$
It is obvious that both $\mathsf{V}(L_2, N_2, T_2)$ and $\mathbf{R}$ are in $\varphi^{-1}(\mathsf{V}(L_2, T_2, N_2))$.
Now let $\mathcal{V}$ be an arbitrary variety in $\varphi^{-1}(\mathsf{V}(L_2, T_2, N_2))$.
Then $\mathcal{V}$ contains $L_2$, $T_2$ and $N_2$.
If $\mathcal{V}$ properly contains $\mathsf{V}(L_2, T_2, N_2)$, then by
Proposition \ref{prolnt}, $\mathcal{V}$ does not satisfy the identity \eqref{lnt02}.
So there exists a semiring $S$ in $\mathcal{V}$ such that $a+b^2\neq a+a^2+b^2$ for some $a, b\in S$.
If $a^2+b^2=a+a^2+b^2$, then it is a routine matter to verify that $\{a^2+b^2, a+b^2, b^2\}$ is isomorphic to $S_{58}$.
This implies that $\mathcal{V}$ contains the join of $\mathsf{V}(L_2, N_2, T_2)$ and $\mathsf{V}(S_{58})$.
By Lemma \ref{lem070201}, we conclude that $\mathcal{V}=\mathbf{R}$.
If $a^2+b^2\neq a+a^2+b^2$, then it is easy to check that
$\{a+a^2+b^2, a^2+b^2, a+b^2, b^2\}$ is isomorphic to $S_{(4, 475)}$.
Lemma \ref{lem070201} implies that $\mathcal{V}=\mathbf{R}$.
Therefore, $\varphi^{-1}(\mathsf{V}(L_2, N_2, T_2))=\{\mathsf{V}(L_2, N_2, T_2), \mathbf{R}\}$.
\end{proof}

\begin{thm}\label{main}
The subvariety lattice of the variety $\mathbf{R}$ is a distributive lattice of order $10$,
whose Hasse diagram is shown in Figure~$\ref{figure1}$.

\setlength{\unitlength}{1.5cm}
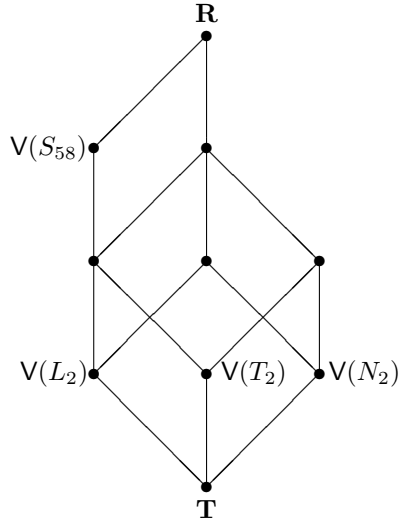
\begin{figure}[ht]
\begin{picture}(6,4.3)
\put(3.2,1.2){\line(1,1){1}}
\put(3.2,1.2){\line(-1,1){1}}
\put(3.2,2.2){\line(0,1){1.1}}
\put(3.2,3.3){\line(0,1){0.94}}
\put(3.2,3.2){\line(1,-1){1}}
\put(3.2,4.2){\line(-1,-1){1}}
\put(3.2,3.2){\line(-1,-1){1}}
\put(3.2,1.2){\line(0,-1){1}}
\put(2.2,2.2){\line(0,-1){0.97}}
\put(4.2,2.2){\line(0,-1){0.97}}
\put(2.2,2.2){\line(0,1){0.97}}
\put(3.2,2.2){\line(-1,-1){1}}
\put(3.2,2.2){\line(1,-1){1}}
\put(2.2,1.2){\line(1,-1){1}}
\put(4.2,1.2){\line(-1,-1){1}}
\multiput(3.2,1.2)(-1.5,0){1}{\circle*{0.1}}
\multiput(4.2,2.2)(1.5,0){1}{\circle*{0.1}}
\multiput(3.2,2.2)(1.5,0){1}{\circle*{0.1}}
\multiput(2.2,2.2)(1.5,0){1}{\circle*{0.1}}
\multiput(4.2,1.2)(-1,1){1}{\circle*{0.1}}
\multiput(2.2,1.2)(1,1){1}{\circle*{0.1}}
\multiput(3.2,0.2)(5,2){1}{\circle*{0.1}}
\multiput(3.2,3.2)(5,2){1}{\circle*{0.1}}
\multiput(2.2,3.2)(1.5,0){1}{\circle*{0.1}}
\multiput(3.2,4.2)(5,2){1}{\circle*{0.1}}
\put(3.2,0){\makebox(0,0){$\mathbf{T}$}}
\put(3.62,1.2){\makebox(0,0){$\mathsf{V}(T_2)$}}
\put(4.6,1.2){\makebox(0,0){$\mathsf{V}(N_2)$}}
\put(1.85,1.2){\makebox(0,0){$\mathsf{V}(L_2)$}}
\put(3.2,4.4){\makebox(0,0){$\mathbf{R}$}}
\put(1.8,3.2){\makebox(0,0){$\mathsf{V}(S_{58})$}}
\end{picture}
\caption{The subvariety lattice of $\mathbf{R}$.}\label{figure1}
\end{figure}
\end{thm}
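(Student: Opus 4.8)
The plan is to read off both the cardinality and the order structure of $\mathcal{L}(\mathbf{R})$ from the fibers of $\varphi$ computed in Proposition \ref{8}, and then to confirm distributivity by a direct inspection ruling out the forbidden sublattices $M_3$ and $N_5$. For the count, Proposition \ref{8} exhibits $\mathcal{L}(\mathbf{R})$ as the disjoint union of the eight fibers of $\varphi$, six of which are singletons and two of which (those over $\mathsf{V}(L_2, T_2)$ and over $\mathsf{V}(L_2, N_2, T_2)$) have two elements. Hence $\abs{\mathcal{L}(\mathbf{R})} = 6 + 2\cdot 2 = 10$, and the ten members are precisely those listed in Figure \ref{figure1}.

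For the order relations, I would first note that the image $\varphi(\mathcal{L}(\mathbf{R})) = \mathcal{L}(\mathsf{V}(L_2, N_2, T_2))$ is, by \cite{sr} together with Propositions \ref{L2}, \ref{N2}, and \ref{T2}, the Boolean lattice on the three atoms $\mathsf{V}(L_2)$, $\mathsf{V}(N_2)$, $\mathsf{V}(T_2)$, since a subvariety of $\mathsf{V}(L_2, N_2, T_2)$ is determined by which of $L_2, N_2, T_2$ it contains. Because $\varphi$ is order-preserving and meet-preserving (it is intersection with a fixed variety) and each of its fibers is an interval of height at most one, every covering relation of $\mathcal{L}(\mathbf{R})$ either lies inside one of the two nontrivial fibers or maps to a relation in this Boolean skeleton. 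Concretely, the relations $\mathsf{V}(L_2, T_2) \prec \mathsf{V}(S_{58})$ and $\mathsf{V}(L_2, N_2, T_2) \prec \mathbf{R}$ come from the two two-element fibers, while $\mathsf{V}(S_{58}) \prec \mathbf{R}$ follows from $S_{58} \in \mathbf{R}$ together with the fiber computation. In each case the covering property is forced: any variety strictly between the two would have to sit in a fiber already determined by Proposition \ref{8}, which is impossible. This recovers exactly the edges of Figure \ref{figure1}.

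The decisive incomparabilities are that $\mathsf{V}(S_{58})$ is incomparable with each of $\mathsf{V}(L_2, N_2)$, $\mathsf{V}(N_2, T_2)$, and $\mathsf{V}(L_2, N_2, T_2)$: on the one hand $\mathsf{V}(S_{58})$ omits $N_2$ (Proposition \ref{N2}), and on the other hand $S_{58}$ fails the identity \eqref{lnt02}, so $S_{58} \notin \mathsf{V}(L_2, N_2, T_2)$ by Proposition \ref{prolnt}. With the Hasse diagram in hand, distributivity is then verified by checking that $\mathcal{L}(\mathbf{R})$ contains no sublattice isomorphic to $M_3$ or $N_5$; since the lattice has only ten elements this is a finite verification. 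The only triples and candidate pentagons that require genuine attention are those straddling the two doubled fibers, and here the join computations $\mathsf{V}(S_{58}) \vee \mathsf{V}(N_2) = \mathbf{R}$ (from Lemma \ref{lem070201}) together with $\mathsf{V}(L_2, T_2) \vee \mathsf{V}(N_2) = \mathsf{V}(L_2, N_2, T_2) \neq \mathbf{R}$ are exactly what break the pentagon condition. I expect this last point — confirming that the two ``extra'' elements $\mathsf{V}(S_{58})$ and $\mathbf{R}$ never combine with the Boolean skeleton to form a diamond or pentagon — to be the main obstacle, since it is the only place where distributivity could plausibly fail.
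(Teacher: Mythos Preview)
Your proposal is correct and follows essentially the same route as the paper's own proof: both derive the ten subvarieties and the Hasse diagram from the fiber decomposition in Proposition~\ref{8}, and then treat distributivity as a direct finite verification. You simply make explicit (via the Boolean skeleton, the incomparabilities involving $\mathsf{V}(S_{58})$, and the $M_3/N_5$ check) what the paper leaves as ``one can readily verify.''
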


\begin{proof}
By Proposition~\ref{8}, the subvariety lattice of the variety $\mathbf{R}$ consists of $10$ varieties,
whose Hasse diagram is given in Figure~$\ref{figure1}$.
Moreover, one can readily verify that this lattice is distributive.
\end{proof}

\begin{cor}\label{coro}
The variety $\mathbf{R}$ is a Cross variety.
Therefore, every subvariety of $\mathbf{R}$ is both finitely based and finitely generated.
\end{cor}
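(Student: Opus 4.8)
The plan is to verify the three defining conditions of a Cross variety directly, since nearly all of the substantive work has already been carried out in the preceding sections. Recall that a variety is Cross if it is finitely based, finitely generated, and has only finitely many subvarieties, so it suffices to check each of these for $\mathbf{R}$ and then invoke the general transfer result.

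First I would observe that $\mathbf{R}$ is finitely based essentially by fiat: it is the variety of all additively idempotent semirings satisfying \eqref{id0703}, and the defining axioms of ai-semirings (commutativity, associativity, and idempotency of $+$, associativity of $\cdot$, and the two distributive laws) together with the single identity \eqref{id0703} constitute a finite set of identities. Hence $\mathbf{R}$ possesses a finite equational basis. Next, finite generation is immediate from Lemma~\ref{lem070201}, which exhibits the single finite algebra $S_{(4, 475)}$ as a generating set for $\mathbf{R}$. Finally, Theorem~\ref{main} shows that the subvariety lattice $\mathcal{L}(\mathbf{R})$ consists of exactly $10$ members, so $\mathbf{R}$ has only finitely many subvarieties. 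Assembling these three facts shows that $\mathbf{R}$ is a Cross variety.

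For the concluding assertion, I would invoke the general result cited from \cite{mv}: every subvariety of a Cross variety is itself both finitely based and finitely generated. Applying this to the Cross variety $\mathbf{R}$ immediately yields the claim for all of its subvarieties. I do not anticipate any genuine obstacle in this corollary; its entire content is the bookkeeping assembly of the definition of $\mathbf{R}$, Lemma~\ref{lem070201}, and Theorem~\ref{main}, with the only external input being the cited implication from \cite{mv}. The real difficulty of the paper lies upstream—in establishing Proposition~\ref{8} and hence Theorem~\ref{main}, which pin down the subvariety lattice precisely—rather than in this final deduction.
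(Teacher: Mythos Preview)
Your proposal is correct and follows the same approach as the paper, which simply records that the corollary is a consequence of Lemma~\ref{lem070201} and Theorem~\ref{main}. You have merely made explicit the three ingredients (finite basis by definition, finite generation via Lemma~\ref{lem070201}, and finitely many subvarieties via Theorem~\ref{main}) and the appeal to \cite{mv}, all of which are implicit in the paper's one-line proof.
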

\begin{proof}
This is a consequence of Lemma \ref{lem070201} and Theorem \ref{main}.
\end{proof}

Let $\mathbf{C}$ denote the ai-semiring variety defined by the identity $yx\approx zx$.
Then every member of $\mathbf{C}$ has dual multiplication structure corresponding to some algebra in $\mathbf{R}$.
By Theorem~\ref{main} and Corollary \ref{coro} we immediately deduce
\begin{cor}\label{coro1}
The variety $\mathbf{C}$ is a Cross variety whose subvariety lattice is a distributive lattice
with exactly $10$ elements\up: $\mathbf{T}$, $\mathsf{V}(R_2)$, $\mathsf{V}(T_2)$,
$\mathsf{V}(N_2)$, $\mathsf{V}(R_2, N_2)$, $\mathsf{V}(R_2, T_2)$, $\mathsf{V}(N_2, T_2)$,
$\mathsf{V}(R_2, N_2, T_2)$, $\mathsf{V}(S_{56})$, and $\mathbf{C}$ itself.
Consequently, every subvariety of the variety $\mathbf{C}$ is both finitely based and finitely generated.
\end{cor}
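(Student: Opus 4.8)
The plan is to exploit the left–right duality of multiplication. For any ai-semiring $(S,+,\cdot)$ define its \emph{opposite} $S^{\mathrm{op}}=(S,+,\circ)$, where $a\circ b=b\cdot a$ and the addition is left unchanged. Since the additive reduct is untouched and the two distributive laws $x(y+z)\approx xy+xz$ and $(x+y)z\approx xz+yz$ are merely interchanged when products are reversed, $S^{\mathrm{op}}$ is again an ai-semiring, and $(S^{\mathrm{op}})^{\mathrm{op}}=S$. Reversing each word of a term turns an identity $\bu\approx\bv$ into a dual identity $\bu^{\mathrm{op}}\approx\bv^{\mathrm{op}}$, and $S$ satisfies $\bu\approx\bv$ precisely when $S^{\mathrm{op}}$ satisfies $\bu^{\mathrm{op}}\approx\bv^{\mathrm{op}}$. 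In particular the defining law $xy\approx xz$ of $\mathbf{R}$ dualizes to $yx\approx zx$, so the opposite construction carries $\mathbf{R}$ onto $\mathbf{C}$ and back.

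First I would record that $S\mapsto S^{\mathrm{op}}$ commutes with the class operators $\mathsf{H}$, $\mathsf{S}$, $\mathsf{P}$: a map is a homomorphism for $\cdot$ exactly when it is one for $\circ$, so the construction sends subalgebras to subalgebras, homomorphic images to homomorphic images, and direct products to direct products. Hence for any subvariety $\mathcal{V}$ of $\mathbf{R}$ the class $\mathcal{V}^{\mathrm{op}}=\{S^{\mathrm{op}}\mid S\in\mathcal{V}\}$ is a subvariety of $\mathbf{C}$, the assignment $\mathcal{V}\mapsto\mathcal{V}^{\mathrm{op}}$ preserves inclusions, and it is its own inverse. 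It is therefore an isomorphism of the lattice $\mathcal{L}(\mathbf{R})$ onto $\mathcal{L}(\mathbf{C})$. Both distributivity and the cardinality $10$ are invariants of lattice isomorphism, so Theorem~\ref{main} immediately gives that $\mathcal{L}(\mathbf{C})$ is a distributive lattice of order $10$.

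It then remains to name the ten images. A direct inspection of the Cayley tables yields $L_2^{\mathrm{op}}\cong R_2$, $N_2^{\mathrm{op}}\cong N_2$, $T_2^{\mathrm{op}}\cong T_2$, and $S_{58}^{\mathrm{op}}\cong S_{56}$ (reversing the constant-row multiplication of $S_{58}$ produces exactly the multiplication table of $S_{56}$, while the addition is unchanged). Since the opposite construction commutes with $\mathsf{V}(\cdot)$, the ten members of $\mathcal{L}(\mathbf{R})$ in Theorem~\ref{main} are transported precisely to $\mathbf{T}$, $\mathsf{V}(R_2)$, $\mathsf{V}(T_2)$, $\mathsf{V}(N_2)$, $\mathsf{V}(R_2, N_2)$, $\mathsf{V}(R_2, T_2)$, $\mathsf{V}(N_2, T_2)$, $\mathsf{V}(R_2, N_2, T_2)$, $\mathsf{V}(S_{56})$, and $\mathbf{C}$. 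The three defining features of a Cross variety also transfer along the duality: $\mathbf{C}$ is finitely generated because, by Lemma~\ref{lem070201}, it is generated by $S_{(4,475)}^{\mathrm{op}}$ (equivalently by $\{S_{56}, N_2\}$); it has finitely many subvarieties by the lattice isomorphism; and a dual of any finite basis for a variety in $\mathcal{L}(\mathbf{R})$ is a finite basis for its image, so finite basedness carries over to $\mathbf{C}$ and to every subvariety. This establishes the full statement.

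The only delicate point, and thus the place I expect the real work to lie, is the claim of the second paragraph: one must verify that ``opposite'' is a genuine involution respecting $\mathsf{H}$, $\mathsf{S}$, and $\mathsf{P}$ \emph{simultaneously}, so that $\mathcal{V}\mapsto\mathcal{V}^{\mathrm{op}}$ is well defined on varieties rather than merely on individual algebras. Once this functoriality is in place, everything else in the corollary is a transparent transport of Theorem~\ref{main} and Corollary~\ref{coro} across the resulting lattice isomorphism, and the explicit table computations are routine.
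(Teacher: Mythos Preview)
Your proposal is correct and is essentially the same argument the paper gives: the paper simply observes that members of $\mathbf{C}$ have dual multiplication structure to those of $\mathbf{R}$ and then invokes Theorem~\ref{main} and Corollary~\ref{coro}, while you have spelled out this left--right duality (the opposite-semiring involution and its compatibility with $\mathsf{H}$, $\mathsf{S}$, $\mathsf{P}$) in full detail. The explicit identifications $L_2^{\mathrm{op}}\cong R_2$, $N_2^{\mathrm{op}}\cong N_2$, $T_2^{\mathrm{op}}\cong T_2$, $S_{58}^{\mathrm{op}}\cong S_{56}$ are exactly what the paper is implicitly using when it lists the ten subvarieties of $\mathbf{C}$.
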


\section{Conclusion}
We have demonstrated that every ai-semiring satisfying the identity $xy \approx xz$ or $yx \approx zx$
has a finite basis for its equational theory. A systematic verification reveals that there are precisely 789 such algebras among all ai-semirings of order less than six. The main result of this paper thus provides fundamental insights into the finite basis problem for small-order ai-semirings, establishing a rigorous foundation for future research in this area. In particular, our findings suggest promising directions for investigating the finite basis property in broader classes of ai-semirings.

\qquad

\noindent
\textbf{Acknowledgements}
The authors thank their team members Zidong Gao, Simin Lyu, Chenyu Yang and Ting Yu for discussions contributed to this paper.

\bibliographystyle{amsplain}

%%%%%%%%%%%%%%%%%%%%%%%%%%%%%%%%%%%%%%%%%%%%%%%%%%%%%%%%%%%%%%%%%%
%%%%%%%%%%%%%%%%%%%%%%%%%%%%%%%%%%%%%%%%%%%%%%%%%%%%%%%%%%%%%%%%%%

\end{document}